\newcommand{\ju}{\tilde{i}}
\newcommand{\eu}{\tilde{f}}
\newcommand{\jn}{\breve{i}}
\newcommand{\en}{\breve{f}}
\newcommand{\hK}{\hat K}
\newcommand{\hx}{\hat{x}}
\renewcommand{\diag}{\dof{diag}}
\newcommand{\dB}{\dof{B}}
\newcommand{\dC}{\dof{C}}
\newcommand{\dR}{\dof{R}}
\newcommand{\dI}{\dof{I}}
\newcommand{\dPi}{\dof{\Pi}}
\newcommand{\dD}{\dof{D}}
\newcommand{\dH}{\dof{H}}
\newcommand{\dS}{\dof{S}}
\newcommand{\dT}{\dof{T}}
\newcommand{\dP}{\dof{P}}
\newcommand{\dv}{\dof{v}}
\newcommand{\du}{\dof{u}}
\newcommand{\dw}{\dof{w}}
\newcommand{\dr}{\dof{r}}
\newcommand{\dx}{\dof{x}}
\newcommand{\dy}{\dof{y}}
\newcommand{\hsig}{\hat \sigma}
\newcommand{\hsign}{\hat\sigma_n}
\newcommand{\bsig}{\bar{\sigma}_n}
\newcommand{\hq}{\hat{q}}
\newcommand{\hG}{\hat {\mathcal{G}}}
\newcommand{\hF}{\hat {\mathcal{F}}}
\newcommand{\FF}{ {\mathcal{F}}}
\newcommand{\hE}{\hat {\mathcal{E}}}
\newcommand{\RRR}{\mathbb{R}}
\newcommand{\domain}{\Omega}
\newcommand{\ran}{\mathop\mathrm{ran}}
\newcommand{\diam}{\mathop\mathrm{diam}}
\newcommand{\mesh}{\domain_h}
\newcommand{\normal}{n}
\newcommand{\om}{\Omega}
\newcommand{\oh}{\mesh}
\renewcommand{\d}{\partial}
\newcommand{\dof}[1]{{\tt{#1}}}
\newcommand{\mx}{\dof{x}}
\newcommand{\mz}{\dof{z}}
\newcommand{\mv}{\dof{v}}
\newcommand{\mB}{\dof{B}}
\newcommand{\mM}{\dof{M}}
\newcommand{\trialspace}{U_h} % subspace of H^1
\newcommand{\interfacespace}{Q_h}
\newcommand{\testspace}{Y_h} % subspace of discontinuous L_2
\newcommand{\lagrange}{\trialspace}
\newcommand{\nedelec}{N_h}
\newcommand{\rt}{R_h} % Raviart-Thomas space, its trace is \interfacespace,
\renewcommand{\div}{\operatorname{div}}
\newcommand{\dive}{\mathrm{div}}
\newcommand{\curl}{\operatorname{curl}}
\newcommand{\Hodiv}[1]{\mathring{H}(\div,{#1})}
\newcommand{\Hdiv}{H(\div,\domain)}
\newcommand{\hdiv}[1]{H(\div,{#1})}
\newcommand{\extension}{\mathcal E^{\div}}
\newcommand{\trace}{\operatorname{trc}_{\normal}}
\newcommand{\extmat}{\dof{E}}
\newcommand{\curlmatrix}{\dof{C}}
\newcommand{\projmatrix}{\dof{\Pi}}
\newcommand{\adsprec}{\dof{B}}
\newcommand{\coeff}{\kappa}
\def\zz{\phantom{0}}
\def\zzz{\phantom{000}}
\title{A scalable preconditioner for a DPG method%
\thanks{Performed under the auspices of the U.S. Department of Energy under
   Contract DE-AC52-07NA27344 (LLNL-JRNL-710378) and supported in part by AFOSR
   grant FA9550-12-1-0484.}}
\author{
A.~T.~Barker\thanks{
   Center for Applied Scientific Computing (CASC),
   Lawrence Livermore National Laboratory, Livermore, CA 94550,
   {\tt atb@llnl.gov}, {\tt dobrev1@llnl.gov}, {\tt tzanio@llnl.gov}}
\and V.~Dobrev\footnotemark[2]
\and J.~Gopalakrishnan\thanks{
   Portland State University, PO Box 751 (MTH), Portland, OR 97207-0751,
   {\tt gjay@pdx.edu}}
\and T.~Kolev\footnotemark[2]
}
\begin{document}
\maketitle

\begin{abstract}
  We show how a scalable preconditioner for the primal discontinuous
  Petrov-Galerkin (DPG) method can be developed using existing
  algebraic multigrid (AMG) preconditioning techniques. The stability
  of the DPG method gives a norm equivalence which allows us to
  exploit existing AMG algorithms and software.  We show how these
  algebraic preconditioners can be applied directly to a Schur
  complement system of interface unknowns arising from the DPG method.
  To the best of our knowledge, this is the first massively scalable
  algebraic preconditioner for DPG problems.
\end{abstract}

\begin{keywords}
algebraic multigrid, BoomerAMG, ADS, Schur complement, Discontinuous Petrov-Galerkin.
\end{keywords}

\begin{AMS}
65F10, 65M55, 65N30.
\end{AMS}

\pagestyle{myheadings}
\thispagestyle{plain}
\markboth{}{}

\section{Introduction} \label{sec:intro}

Discontinuous Petrov-Galerkin (DPG) methods, introduced
in~\cite{DG:2010:DPGI,DG:2011:DPGII}, constructed test spaces that
guarantee stability. Today these methods are known to be
simultaneously viewable as Galerkin mixed methods, as least-squares
methods in  nonstandard norms, or as Petrov-Galerkin methods using
discontinuous functions~\cite{DemkoGopal16}.  DPG methods have  a great
deal of flexibility, allowing them to be applied to a wide variety of problems
\cite{ChanHeuerBui-T14,DG:2011:DPGIV,DGN:2011:DPGIII}, and their
convergence theory has now matured~\cite{CarstDemkoGopal16}.

However, there is a lack of fast scalable solvers for the DPG method.
In~\cite{barker-brenner-one-level-dpg}, an overlapping Schwarz
preconditioner is analyzed: because it has no coarse level, the
preconditioner expectedly deteriorates as overlap size become small.
A coarse level was added for improved scalability
in~\cite{li-xu-dd-dpg}, where the authors analyzed a two-level
additive Schwarz preconditioner for an ultraweak DPG method applied to
the Poisson equation with Robin boundary condition. Going beyond the
Poisson problem to the harmonic wave equation, there are numerical
reports of good performance of certain preconditioning
strategies~\cite{GopalSchob15}.  To our knowledge, no multilevel
preconditioners have been proposed for the DPG method until very
recently in~\cite{RoberChan16}, where geometric multilevel strategies
are investigated numerically but without theoretical analysis.  In
this paper we show how existing algebraic multilevel preconditioners
can be effectively combined to precondition a DPG system, including at
very large scale on parallel supercomputers.

The particular DPG method we consider is the so-called primal DPG
method~\cite{demkowicz-gopalakrishnan-primal-dpg}, reviewed in the
next section. After describing a basic norm equivalence associated
with the method, we proceed to analyze one of the component norms in
Section~\ref{sec:qnorm}. We show that this interface norm, obtained as
an infimum over an infinite-dimensional space, is equivalent to an
infimum over a finite-dimensional space.  An auxiliary algebraic Schur
complement result is presented in Section~\ref{sec:schur-result}.
Section~\ref{sec:ads} identifies the finite-dimensional infimum as a
Schur complement norm and proceeds to analyze an auxiliary-space
preconditioner for the Schur complement. The preconditioner and the
main result are summarized in
Section~\ref{sec:prec}. Section~\ref{sec:results} reports results from
numerical studies of the proposed preconditioner.  We conclude by
summarizing the main results from the paper in
Section~\ref{sec:conclusions}.

\section{The primal DPG method} \label{sec:dpg}

For completeness and consistency of notation, we recall some
definitions and results from the work introducing the primal DPG
system~\cite{demkowicz-gopalakrishnan-primal-dpg}. The model problem
we consider is the Poisson problem of finding $ u \in H_0^1(\domain) $
such that
\begin{equation}
  \int_\om \coeff \nabla u \cdot \nabla v\; dx = \int_\om f v \; dx
  \label{eq:poisson}
\end{equation}
for all $ v \in H_0^1(\domain) $, where $ \domain $ is a polygonal (in
$ \mathbb R^2 $) or polyhedral domain (in $ \mathbb R^3$) with
Lipschitz boundary and $ \coeff > 0 $ is a piecewise constant coefficient.
Zero Dirichlet boundary conditions on
$ \partial \domain $ are essentially imposed in~\eqref{eq:poisson}.
In practice the method can also handle more general problems with
varying coefficients and different boundary conditions, but we choose
this setting for simplicity.

Even before discretization, the DPG formulation uses a mesh-dependent
weak form.  We assume that $\domain$ is given together with a mesh
$\mesh$ that partitions $\domain$ into elements of varying shapes. For
example, in the $ \mathbb R^2$ case, the mesh elements $K \in \mesh $
may be triangles or quadrilaterals, while in $ \mathbb R^3$
the elements may be tetrahedra, prisms, hexahedra, etc. Precise
assumptions on the mesh and element shapes will be specified later,
but for now we only require that the boundary of each element be
Lipschitz, so that traces of Sobolev space functions on the element
boundaries are well-defined. Specifically, we require a well-defined
normal trace operator
\[
\trace : \Hdiv   \to \prod_{K \in \mesh } H^{-1/2}(\partial K),
\]
which maps to element-wise traces
$(\trace q) |_{\d K} = q \cdot n |_{\d K}$.  One can also
define $\trace q$ as a single valued function on the mesh facets:
Indeed, if each
interface facet $\gamma = \bar{K}_1 \cap \bar{K}_2$ between two
elements $K_1,K_2 \in \oh$, as well as boundary facets
$\gamma = \bar{K}_1 \cap \d\om$ are Lipschitz, then we may fix a
continuous unit normal vector function $n_\gamma$ on $\gamma$ and
define $(\trace q )|_\gamma = n_\gamma \cdot q|_\gamma.$ This is a
well-defined function in the dual space of
$\mathring{H}^{1/2}(\gamma)$ (denoted also by
$H^{1/2}_{00}(\gamma)$), whenever $q \in \Hdiv$.

The numerical fluxes of the DPG method lie in the range of $\trace$,
i.e., in the space $ Q = \ran (\trace) $ with norm given by
\begin{align}
  \label{eq:Qnorm}
  \| q \|_Q &= \inf_{ \tau \in \trace^{-1}\{ q\} } \|\tau \|_{\Hdiv}.
\end{align}
Here, as usual, $\trace^{-1}\{ q\}$ denotes the pre-image of the
singleton $\{ q \}$.  It is standard to prove that the {\em minimal
extension operator} $E : Q \to \Hdiv,$ defined by $\trace (Eq) = q$
and $(E q, v)_{\hdiv K} = 0$ for all $v \in \Hodiv K$ and
$K \in \oh,$ attains the infimum at~\eqref{eq:Qnorm}, i.e.,
\begin{equation}
  \label{eq:Eq}
  \| q \|_Q = \| Eq \|_{\Hdiv}.
\end{equation}
Here and throughout, for any inner product space $W$, we use
$\| \cdot \|_W$ and $(\cdot, \cdot)_W$ to denote its norm and inner
product, respectively. When the space is uniquely understood from the
argument or other context, we will drop the subscript.  Let
\[
H^1(\oh) = \prod_{K \in \oh} H^1(K).
\]
This product space is endowed with the standard Cartesian product norm and inner
product.  For brevity, put $X = H_0^1(\om) \times Q$ and
$Y = H^1(\oh)$.  Define the bilinear form
$b : X \times Y \to \mathbb R$ by
\[
b( (w, r), v) =
\sum_{K \in \oh}
\left(
\int_K \nabla w \cdot \nabla v\; dx
+ \langle r, v \rangle_{H^{-1/2}(\d K)}
\right)
\]
where $\langle r, v \rangle_{H^{-1/2}(\d K)}$ denotes the duality
pairing between $H^{-1/2}(\d K)$ and $H^{1/2}(\d K)$.  The
Dirichlet problem~\eqref{eq:poisson} can then be
reformulated~\cite{demkowicz-gopalakrishnan-primal-dpg} as the problem
of finding a pair $(u, q) \in X$ satisfying
\begin{equation}
  \label{eq:dpgweak}
  b( (u,q), v) = F(v), \qquad \forall v \in Y,
\end{equation}
with $F(v) = (f,v)_{L^2(\om)}$.  It is proved
in~\cite[Lemma 3.4]{demkowicz-gopalakrishnan-primal-dpg} that
the problem~\eqref{eq:dpgweak} is uniquely solvable for $(u,q) \in X$
given any $F$ in the dual space of $Y$ (see also
\cite[Example~3.6]{CarstDemkoGopal16} for a simplified analysis).

The primal DPG method uses the formulation~\eqref{eq:dpgweak} and
finite element subspaces $U_h \subset H_0^1(\om)$, $Q_h \subset Q$,
and $Y_h \subset H^1(\oh)$. Here $h = \max_{K \in \oh} \diam(K)$ denotes the mesh size of $\oh$. To
describe a computational version of the method, let
$\{u_i\}, \{ q_j\}, \{v_k\}$ denote a finite element basis for $U_h$,
$Q_h$ and $Y_h$, respectively.  Define the matrices
\[
[\dof{B}_0]_{ki} = b( (u_i, 0), v_k), \quad
[\dof{B}_1]_{kj} = b( (0, q_j), v_k), \quad
[\dof{M}]_{kl} = (v_l,v_k)_{H^1(\oh)}
\]
and  set
\[
\dof B =
\begin{bmatrix}
  \dof {B_0} & \dof {B_1}
\end{bmatrix},
\qquad
\dof{A}= \dof{B}^T \dof{M}^{-1} \dof B =
\begin{bmatrix}
  \dof{B}_0^T \dof{M}^{-1} \dof{B_0} & \dof{B}_0^T \dof{M}^{-1} \dof{B}_1
  \\
  \dof{B}_1^T \dof{M}^{-1} \dof{B_0} & \dof{B}_1^T \dof{M}^{-1} \dof{B}_1
\end{bmatrix}.
\]
Let $\dof u$ denote the vector in $\mathbb{R}^{\dim(U_h)}$
representing a function $u \in U_h$ by the basis expansion formula
$u = \sum_i [\dof u]_i u_i.$ The vectors $\dof q$ in
$\mathbb{R}^{\dim(Q_h)}$ and $\dof v$ in $\mathbb{R}^{\dim(Y_h)}$ are
similarly defined. Restricting~\eqref{eq:dpgweak} to the finite dimensional
spaces formally gives
\begin{equation*}
  \dof B
  \begin{bmatrix}
    \dof u \\ \dof q
  \end{bmatrix}
  =
  \dof F
\end{equation*}
where $[\dof F]_k = F(v_k)$.
The DPG discretization of~\eqref{eq:dpgweak} solves instead the following
symmetric and positive definite problem for $\dof u$ and $\dof q$:
\begin{equation}
  \label{eq:dpgmat}
  \dof A
  \begin{bmatrix}
    \dof u \\ \dof q
  \end{bmatrix}
  =
  \dof g
  %% \begin{bmatrix}
  %%   \dof g \\ \dof 0
  %% \end{bmatrix}
\end{equation}
where $\dof g = \dof{B}^T \dof{M}^{-1} \dof{F}$.  Note that $ \dof M$ is the Gram matrix
in the ``broken'' $H^1(\oh)$-inner product and so is block diagonal
(one block per element).  Thus, $ \dof{M}^{-1} $ can be evaluated fast
locally.

The DPG method admits three well-known interpretations.  The early
papers on the DPG method used the concept of {\em optimal test
functions}~\cite{DG:2010:DPGI}. Its interpretation as a least-squares
method in a nonstandard inner product was pointed out
in~\cite[p.~6]{DG:2011:DPGII}. Its interpretation as a mixed method
is now well known (see e.g.,~\cite[Theorem~2.4]{BoumaGopalHarb14}).
It is easy to see that all these three interpretations, in practice,
yield the same matrix system~\eqref{eq:dpgmat} when the same spaces and
bases are used.

The starting point of our analysis is the stability of the DPG
method~\eqref{eq:dpgmat}.  Let $X_h = U_h \times Q_h$ and let $\mx$
and $\mz$ be vectors representing two functions $x$ and $z$ in $X_h$,
respectively. Per the above-mentioned notational conventions, $(\dof A
\dof x, \mz)$ denotes the Euclidean inner product $\mz^T \dof{A} \dof
x$. It defines a bilinear form in the function space~$X_h$, namely
$a(x,z) = (\dof A \dof x, \dof z)$. Note that both $X_h$ and $Y_h$ are
used in the definition of $\dof A$. Throughout this paper we assume
that the mesh $\oh$ and the spaces $X_h$ and $Y_h$ are such that there
exist mesh-independent constants $ c_1 $ and $ c_ 2 $ satisfying
\begin{equation}
  c_1 \| x \|_X^2
  \leq
  a(x,x)
  \leq
  c_2 \| x \|_X^2,
  \label{eq:q-equivalence}
\end{equation}
for all $x \in X_h$. The connection between~\eqref{eq:q-equivalence}
and the stability of the method is described next.

\begin{proposition}
  Assumption~\eqref{eq:q-equivalence} holds
  if and only if
  \begin{equation}
    \label{eq:bsup}
  c_1 \| x \|_X \le \sup_{ 0 \ne v \in Y_h}
  \frac{ |b(x, v)| }{ \| v\|_Y } \le c_2 \| x \|_X
  \end{equation}
  for all $x$ in $X_h$.
\end{proposition}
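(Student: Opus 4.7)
The plan is to reduce the proposition to a single algebraic identity relating the quadratic form $a(\cdot,\cdot)$ to the square of the supremum appearing in \eqref{eq:bsup}. Once that identity is established, the equivalence of \eqref{eq:q-equivalence} and \eqref{eq:bsup} is just a matter of taking square roots and relabeling constants.

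First I would record the standard duality identity: for any symmetric positive definite matrix $\dof{M}$ and any vector $\dof{w}$,
\[
\dof{w}^T \dof{M}^{-1} \dof{w} \;=\; \sup_{\dof{v} \neq 0} \frac{(\dof{w}^T \dof{v})^2}{\dof{v}^T \dof{M} \dof{v}},
\]
with the supremum attained at $\dof{v} = \dof{M}^{-1}\dof{w}$. This is immediate from Cauchy--Schwarz applied to $\dof{M}^{-1/2}\dof{w}$ and $\dof{M}^{1/2}\dof{v}$.

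Next I would apply this with $\dof{w} = \dof{B}\mx$, where $\mx$ is the coefficient vector of $x = (u,q) \in X_h$. By the definition of the blocks of $\dof{B}$ one has $\dof{v}^T \dof{B} \mx = b(x, v)$ whenever $v = \sum_k [\dof{v}]_k v_k \in Y_h$, while the definition of $\dof{M}$ as the Gram matrix of the $H^1(\oh)$ inner product gives $\dof{v}^T \dof{M} \dof{v} = \|v\|_Y^2$. Substituting into the duality identity yields
\[
a(x,x) \;=\; \mx^T \dof{B}^T \dof{M}^{-1} \dof{B} \mx \;=\; \sup_{0 \neq v \in Y_h} \frac{|b(x,v)|^2}{\|v\|_Y^2}.
\]

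With this identity in hand, the proposition follows by taking square roots: \eqref{eq:q-equivalence} is equivalent to
\[
\sqrt{c_1}\,\|x\|_X \;\le\; \sup_{0 \neq v \in Y_h}\frac{|b(x,v)|}{\|v\|_Y} \;\le\; \sqrt{c_2}\,\|x\|_X,
\]
which is \eqref{eq:bsup} after renaming $\sqrt{c_1}\mapsto c_1$ and $\sqrt{c_2}\mapsto c_2$ (so the same symbols are used but with adjusted values). There is no real obstacle here; the only subtlety to watch out for is the implicit squaring of constants when passing between the bilinear-form inequality and the sup-form inequality, which should be mentioned explicitly in the final write-up to avoid confusion with the numerical values of $c_1, c_2$.
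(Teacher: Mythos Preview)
Your proposal is correct and follows essentially the same route as the paper: both establish the identity $a(x,x) = \bigl(\sup_{0\neq v\in Y_h} |b(x,v)|/\|v\|_Y\bigr)^2$ and then take square roots. The paper packages the argument via the discrete trial-to-test operator $T_h:X_h\to Y_h$ defined by $(T_h x, y)_Y = b(x,y)$, observing that $\|T_h x\|_Y$ equals the supremum and that $a(x,x)=\|T_h x\|_Y^2$ since the coefficient vector of $T_h x$ is $\dof{M}^{-1}\dof{B}\mx$; your Cauchy--Schwarz duality identity for $\dof{w}^T\dof{M}^{-1}\dof{w}$ is exactly the matrix form of the same computation, and your remark about the implicit squaring of $c_1,c_2$ is a point the paper leaves tacit.
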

\begin{proof}
  Define $T_h : X_h \to Y_h$ by $ (T_h x, y)_Y = b(x, y), $ for all
  $x \in X_h$ and $y \in Y_h$. Then, for any $x \in X_h$,
  \[
    \| T_h x \|_X^2 =
    \sup_{ 0 \ne v \in Y_h}
  \frac{ (T_h x, v)_Y }{ \| v\|_Y } =
    \sup_{ 0 \ne v \in Y_h}
  \frac{ b(x, v) }{ \| v\|_Y }.
  \]
  Letting $\mx$ and $\mv$ denote the vector representations of
  $x \in X_h$ and $v = T_h x \in Y_h$, respectively, it is easy to see
  that $\mv = \mM^{-1} \mB \mx$. Hence the result follows from
  $ a(x,x) = (\mM \mM^{-1} \mB \mx, \mM^{-1} \mB \mx) = \| T_h
  x\|_X^2.$
\end{proof}

Clearly, the upper inequality of~\eqref{eq:bsup} follows from the
continuity of the bilinear form $b(\cdot, \cdot),$ and therefore holds
independently of the choice of the discrete spaces. The lower
inequality of~\eqref{eq:bsup} is an inf-sup condition. It follows from
the fact that~\eqref{eq:dpgweak} is well-posed whenever the discrete spaces
are chosen so that a Fortin operator~\cite{GopalQiu14} can be
constructed. Here are a few known examples of cases where a $c_1$
independent of~$h$ can be obtained for the Dirichlet problem under
consideration:

\bigskip

\begin{enumerate}
\item Suppose the mesh $\oh$ is a quasiuniform tetrahedral
  geometrically conforming mesh, $U_h$ is the Lagrange finite element
  space of degree~$p$, $Q_h = \{ q: q|_\gamma $ is a polynomial of
  degree at most ${p-1}$ on each mesh facet $\gamma\}$, and
  $Y_h = \{ v : v|_K $ is a polynomial of degree at most $p+2\}.$ Then
  a Fortin operator provided in~\cite{GopalQiu14} yields a $c_1$
  independent of~$h$, as proved in~\cite{demkowicz-gopalakrishnan-primal-dpg}.
\item When $\oh$ is a uniform mesh of rectangular elements,
  $U_h = \{ w \in H_0^1(\om): w|_K $ is in the tensor product space of
  polynomials of degree at most $p$ in each coordinate direction, for
  all elements $K\in \oh\}$, $Q_h = \{ q: q|_\gamma \in P_{p}(\gamma)$
  on each mesh facet $\gamma\}$, and $Y_h= \{ v : v|_K $ is a
  polynomial of degree at most $p+3$ in each coordinate direction$\},$ a Fortin operator
  in~\cite{CaloColliNiemi14} gives a mesh-independent $c_1$.
\end{enumerate}

\bigskip

In the remainder of this paper, we examine an important implication
of~\eqref{eq:q-equivalence}. Namely, in order to precondition the large
Hermitian positive definite DPG system~\eqref{eq:dpgmat}, it suffices
to obtain preconditioner for the $\| \cdot \|_X$ norm. In  our
model problem, this norm is
\[
\| x \|_X^2 = \| u \|_{H^1(\om)}^2 + \| q \|_Q^2.
\]
for any $x = (u,q) \in U_h \times Q_h,$ so
it suffices to combine preconditioners for the
$H^1(\om)$ and $Q$ norms. Since the former is standard, we focus on the
latter in the next section.

\section{Characterizing  the $Q$-norm} \label{sec:qnorm}

The $Q$-norm~\eqref{eq:Qnorm} is defined through a minimization over
an infinite dimensional space (the minimal extension~$E$
in~\eqref{eq:Eq} is not computable). In this section, we relate this norm
to a minimum over a finite dimensional subspace.

\subsection{Tetrahedral case}   \label{ssec:tet}

To present the idea transparently, we first detail the case when $\oh$
is a geometrically conforming mesh of tetrahedral elements.  For any
tetrahedron $K$, let $R_p(K) = P_p(K)^3 + x P_p(K),$ where $x$ is the
coordinate vector and $P_p(K)$ denotes the set of all polynomials of
total degree at most~$p \geq 1$. The Raviart-Thomas finite element space is
$R_h = \{ r \in \Hdiv : r|_K \in R_p(K)$ for all $K \in \oh\}$. Let
$Q_h = \trace(R_h).$ Clearly $Q_h$ is a finite dimensional subspace
of~$Q$. Define $E_h : Q_h \to R_h$ by $\trace (E_hq) = q,$ and
\[
(E_h q, v)_{\hdiv K} = 0, \qquad\forall  v \in R_p(K) \cap \Hodiv{K},
\]
for all $K \in \oh.$ This computable approximation of the minimal
extension operator defines a new norm on $Q_h$,
\[
\| q \|_{Q_h} = \| E_h q \|_{\Hdiv}.
\]

We now proceed to prove the equivalence of this norm with the $Q$-norm
(in Theorem~\ref{thm:tet} below).  Throughout this section, let $c$
denote a generic positive constant whose value might change from one
occurrence to another, but will remain {\em independent of $h$ and
$p$}.  Let $\hK$ denote the unit tetrahedron, $\hat n$ denote its
outward unit normal on $\d \hK$, and
$\hsign = \hsig \cdot \hat n|_{\d \hK}$.  Let
$Q_p(\d \hK) = \{ \hsign: \; \hsig \in R_p(\hK)\}.$ For any
$\hsig \in R_p(\hK)$ define the constant function
\[
\bsig  =  \frac{1}{| \d \hK |} \int_{\d \hK} \hsign\; ds \,,
\]
where $| \d \hK|$ denotes the surface area of $\d \hK.$

\begin{lemma}
  \label{lem:G}
  There is a $c>0$ and a $\hG : \RRR \to R_p(\hK)$ such that for
  any $\hsig$ in  $\hdiv \hK$ with $\hsign \in Q_p(\d \hK)$,
  we have
  $\hat n \cdot (\hG \bsig)|_{\d \hK}  = \bsig$,
  \[
  \| \hG \bsig \|_{L^2(\hK)} \le c \| \hsig \|_{\hdiv \hK},
  \quad \text{ and } \quad
  \| \div \hG \bsig \|_{L^2(\hK)}
  \le c \| \div \hsig \|_{\hK}.
  \]
  In the case of tetrahedral elements, we actually prove the stronger inequality
  \[
  \| \hG \bsig \|_{\hdiv \hK} \le c \| \div \hsig \|_{\hK}.
  \]
\end{lemma}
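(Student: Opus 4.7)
The plan is to exploit that $\hG\colon \RRR\to R_p(\hK)$ is linear on a one-dimensional domain, so it is completely determined by the single function $w := \hG(1)\in R_p(\hK)$. I would look for $w$ satisfying the pointwise identity $\hat n\cdot w|_{\partial\hK}\equiv 1$ and then set $\hG(\alpha)=\alpha\,w$. The boundary identity $\hat n\cdot(\hG \bsig)|_{\partial\hK} = \bsig$ is then automatic, and all three estimates reduce to controlling the scalar $|\bsig|$.

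To construct $w$, I would work inside the lowest-order Raviart--Thomas subspace $P_0(\hK)^3 + xP_0(\hK)$, which sits inside $R_p(\hK)$ for every $p\ge 1$, has dimension four on the tetrahedron, and is unisolvent for the four face-averaged normal moments. Every field in this subspace has a constant normal trace on each face of $\hK$ (since any face lies in a plane $\hat n\cdot x = d$), so the moment condition $|f|^{-1}\int_f w\cdot\hat n\,ds = 1$ on each face $f\subset\partial\hK$ is equivalent to the pointwise condition $w\cdot\hat n|_f\equiv 1$. This determines a unique $w$ whose $\hdiv{\hK}$-norm is a fixed constant depending only on the reference tetrahedron, and in particular independent of $p$.

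The bound on $|\bsig|$ then follows immediately from the divergence theorem:
\[
|\partial\hK|\,\bsig = \int_{\partial\hK}\hsig\cdot\hat n\,ds = \int_{\hK}\div\hsig\,dx,
\]
so Cauchy--Schwarz gives $|\bsig|\le (|\hK|^{1/2}/|\partial\hK|)\,\|\div\hsig\|_{L^2(\hK)}$. Combining with $\hG\bsig = \bsig\,w$ yields $\|\hG\bsig\|_{L^2(\hK)} = |\bsig|\,\|w\|_{L^2(\hK)}$ and $\|\div\hG\bsig\|_{L^2(\hK)} = |\bsig|\,\|\div w\|_{L^2(\hK)}$, both bounded by $c\|\div\hsig\|_{L^2(\hK)}$. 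Summing the squares delivers the stronger tetrahedral inequality $\|\hG\bsig\|_{\hdiv{\hK}}\le c\|\div\hsig\|_{L^2(\hK)}$, and the first two bounds of the lemma follow since $\|\div\hsig\|_{L^2(\hK)}\le \|\hsig\|_{\hdiv{\hK}}$.

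I do not anticipate a genuine obstacle on the tetrahedron: the construction of $w$ uses only the standard structure of the lowest-order Raviart--Thomas element, and neither the hypothesis $\hsign\in Q_p(\partial\hK)$ nor the polynomial degree $p$ enters the estimate because $\bsig$ is a single scalar. The paper presumably flags the stronger inequality as special to this case because for non-simplicial reference shapes (hexahedra, prisms) one cannot in general produce a low-degree field whose normal trace is constant on \emph{every} face of a single element, so one is forced to absorb an extra contribution from $\hsig$ itself rather than only from $\div\hsig$.
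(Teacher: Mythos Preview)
Your proposal is correct and follows essentially the same approach as the paper: define $\hG$ linearly via a fixed $w=\hG(1)$ in the lowest-order Raviart--Thomas space with $w\cdot\hat n\equiv 1$ on $\partial\hK$, then control $|\bsig|$ via the divergence theorem. The only difference is that the paper writes $w$ down explicitly as $w=(\hx-\hx_I)/d$, where $\hx_I$ is the incenter of $\hK$ and $d$ the inradius, whereas you invoke unisolvence of $RT_0$ to obtain the same $w$ abstractly.
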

\begin{proof}
  Let $\hx_I$ denote the incenter of the unit tetrahedron $\hK$. Then
  $ (\hx - \hx_I) \cdot \hat{n} = d $ is constant for any
  $ \hx \in \partial \hK$  ($d=3|\hK|/|\d\hK|$ is the radius of the insphere).
  Define
  \[
  \hG \bsig = \frac{\bsig}{d} (\hx - \hx_I).
  \]
  Then, setting $\hat{c}= \| \hG 1 \|_{\hdiv \hK} / |\d \hK|$ and
  $c=\hat{c} |\hK|^{\frac{1}{2}}$, we have
  \begin{align} \label{eq:Gdiv}
    \| \hG \bsig \|_{\hdiv \hK}^2
    & = \hat{c}^2
      \left|
      \int_{\partial {\hK}} \hsign\, ds \right|^2
      = \hat{c}^2 \left| \int_{{\hK}} \div \hsig \, dx \right|^2
      \leq c^2 \| \div \hsig \|_{\hK}^2
  \end{align}
  and $\hG \bsig = \bsig (\hx - \hx_I) \cdot \hat n / d = \bsig$, for all
  $\hx\in \d\hK$.
\end{proof}

\begin{lemma}
  \label{lem:E}
  There is a $c>0$ and a $\hE : Q_p(\d \hK) \to R_p(\hK)$ such that
  for any $\hsig$ in $\hdiv \hK$ with $\hsign \in Q_p(\d \hK)$, we
  have $\hat n \cdot (\hE \hsign)|_{\d\hK} = \hsign$,
  \[
  \| \hE( \hsign - \bsig) \|_{L^2(\hK)} \le c \|  \hsig \|_{\hdiv \hK },
  \quad \text{ and }\quad
  \dive (\hE (\hsign-\bsig)) = 0.
  \]
\end{lemma}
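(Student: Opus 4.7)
The plan is to decompose each boundary trace $\hsign \in Q_p(\d \hK)$ into its mean value $\bsig$ and its mean-zero residual $\hsign-\bsig$, and to lift the two pieces separately. The constant part is handled by $\hG$ from Lemma~\ref{lem:G}. For the mean-zero part, the key observation is that $\int_{\d \hK}(\hsign-\bsig)\,ds = 0$, which is exactly the compatibility condition needed for a divergence-free lifting. I would therefore invoke a polynomial-preserving right inverse of the normal trace: a linear operator
\[
\hL : \bigl\{\mu \in Q_p(\d \hK) :\; \textstyle\int_{\d \hK} \mu\,ds = 0\bigr\} \to R_p(\hK)
\]
satisfying $\hat n \cdot \hL \mu = \mu$ on $\d\hK$, $\dive \hL \mu = 0$, and $\|\hL \mu\|_{L^2(\hK)} \le c\,\|\mu\|_{H^{-1/2}(\d \hK)}$ with $c$ independent of $p$.

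Given $\hL$, I would define $\hE \hsign := \hL(\hsign - \bsig) + \hG \bsig$. By linearity of both maps, the normal trace picks up $\hat n \cdot \hE \hsign = (\hsign - \bsig) + \bsig = \hsign$. Since $\hsign - \bsig$ has zero boundary mean, linearity of $\hE$ also gives $\hE(\hsign-\bsig) = \hL(\hsign-\bsig)$, so the divergence-free condition and the $L^2$ bound reduce immediately to the corresponding properties of $\hL$. The last estimate needed to close the bound is $\|\hsign - \bsig\|_{H^{-1/2}(\d \hK)} \le c\,\|\hsig\|_{\hdiv \hK}$, which combines the standard continuity of the normal trace $\hdiv\hK \to H^{-1/2}(\d \hK)$ with the uniform boundedness of the boundary-mean operation in $H^{-1/2}(\d \hK)$.

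The main obstacle is the construction of $\hL$ with a $p$-independent norm bound. If only a $p$-dependent constant were required, this would follow at once from finite-dimensional norm equivalence on the reference element: the restriction of $v \mapsto \hat n \cdot v$ to $R_p(\hK) \cap \ker(\dive)$ is surjective onto the mean-zero subspace of $Q_p(\d \hK)$ (by the exact sequence property of the polynomial RT/N\'ed\'elec complex on $\hK$), and any surjection between finite-dimensional normed spaces has a bounded right inverse. The $p$-robust version relies on the polynomial extension machinery developed in the $hp$-FEM literature, e.g., regularized Poincar\'e-type operators or the constructions of Costabel-Dauge-Demkowicz and Demkowicz-Gopalakrishnan-Sch\"oberl. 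Alternatively, one could build $\hL \mu$ by writing $\hL \mu = \curl \hat\psi$ for a N\'ed\'elec potential $\hat\psi$ whose tangential trace encodes $\mu$ face-by-face, and then invoke $p$-robust tangential polynomial extensions on the faces.
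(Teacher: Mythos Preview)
Your proposal is correct and takes essentially the same approach as the paper: both rest on a $p$-robust polynomial extension operator into $R_p(\hK)$, and the paper invokes exactly the Demkowicz--Gopalakrishnan--Sch\"oberl construction you mention, setting $\hE=\extension$ directly since that operator already yields divergence-free extensions for mean-zero boundary data. The only cosmetic difference is that the paper bounds $\|\extension(\hsign-\bsig)\|$ via the triangle inequality and the minimum-extension property (using $\hsig$ and $\hG\bsig$ from Lemma~\ref{lem:G} as competing extensions), whereas you route through the $H^{-1/2}(\d\hK)$ norm.
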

\begin{proof}
  We use the polynomial extension operator $ \extension $ from
  \cite[Theorem~7.1]{demkowicz-gopalakrishnan-polynomial-extension-iii}:
  Accordingly {\em (a)} if $\hq \in Q_p(\d \hK)$, then
  $\extension \hq$ is in $R_p(\hK),$ {\em (b)} if $\hq$ has zero mean,
  then $\dive(\extension \hq)=0$, and {\em (c)} if $\hat\tau$ is any
  extension of $\hq$ (i.e., $\hat \tau$ is a function in $\hdiv \hK$ satisfying
  $\hat n \cdot \hat\tau|_{\d \hK} = \hq$), then
  \[
  \| \extension \hq \|_{\hdiv \hK} \le c \| \hat\tau \|_{\hdiv \hK}.
  \]
  Since $\hsig$ is an extension of $\hsign$ and $\hG \bsig$ is an
  extension of $\bsig$,
  \begin{align*}
  \| \extension (\hsign -\bsig) \|_{\hdiv \hK}
    & \leq
      \| \extension \hsign \|_{\hdiv \hK}
    +
    \| \extension \bsig \|_{\hdiv \hK}
    \\
    & \le c \left( \| \hsig \|_{\hdiv \hK} + \| \hG \bsig \|_{\hdiv
      \hK}\right)
      \le c \| \hsig \|_{\hdiv \hK}.
  \end{align*}
  Finally, since $\hq = \hsign - \bsig$ has zero mean, we have
  $\dive ( \extension ( \hsign -\bsig) ) = 0.$
\end{proof}

Before the next result, recall that for any tetrahedral element $K,$
there is an affine homeomorphism $\Phi : \hK \to K$. Let $[D \Phi]$
denote the Jacobian matrix of its derivatives and let
$J = \det [D\Phi]$.  Define the Piola maps
\[
\Phi_* \hsig = J^{-1} [D \Phi_K] \hat \sigma \circ \Phi^{-1}, \qquad
\Phi^* \sigma  = J [D \Phi_K]^{-1} \sigma \circ \Phi.
\]
Clearly $\Phi_*$ maps functions on $\hK$ to $K,$ while $\Phi^*$ maps
in the opposite direction, from $K$ to $\hK$.
Letting $|K|$ denote the volume of the $K$ and
$h_K = \diam(K)$, we recall the following standard estimates
\cite{mixed-fem-book} for
affine $\Phi$: There is a $c>0$, depending only on shape regularity of
$K,$ such that
\begin{subequations}
  \label{eq:scaling}
  \begin{align}
    \|\Phi_* \hsig \|_{L^2(K)}^2
    &
      \le  c \, \frac{h_K^2}{|K|} \| \hsig \|_{L^2(\hK)}^2,
    &
      \| \dive (\Phi_* \hsig)\|_{L^2(K)}^2
    &
      \le  \frac{c}{|K|} \| \dive \hsig \|_{L^2(\hK)}^2,
    \\
    \|\Phi^* \sigma \|_{L^2(\hK)}^2
    &
      \le   c \,\frac{|K|}{h_K^2} \| \sigma \|_{L^2(K)}^2,
    &
      \| \dive (\Phi^* \sigma) \|_{L^2(\hK)}^2
    & \le  c \,{|K|}\; \| \dive \sigma \|_{L^2(K)}^2,
  \end{align}
\end{subequations}
for all $\hsig \in \hdiv \hK$ and $\sigma \in \hdiv K$.

We are now in a position to put everything together and prove the main
norm equivalence result of this section.

\begin{theorem}
  \label{thm:tet}
  If $\oh$ is shape regular, then there is a $c_3>0$ independent of
  $h$ and $p$ (and depending only on the shape regularity) such that
  \begin{equation}
    \label{eq:Q2Qh}
    \| q \|_Q \le \| q \|_{Q_h} \le c_3 \|q \|_Q
  \end{equation}
  for all $q \in Q_h$.
\end{theorem}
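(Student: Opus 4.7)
The plan is as follows. The lower bound $\|q\|_Q \le \|q\|_{Q_h}$ is free: by construction $E_h q \in R_h \subset H(\operatorname{div},\Omega)$ satisfies $\trace(E_h q) = q$, so $E_h q$ is admissible in the infimum~\eqref{eq:Qnorm} defining $\|q\|_Q$, yielding $\|q\|_Q \le \|E_h q\|_{\Hdiv} = \|q\|_{Q_h}$.

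For the upper bound, the key observation is that $E_h q$ is the $H(\operatorname{div},K)$-orthogonal projection (onto the discrete complement of the interior bubbles) of any discrete extension of $q$, and therefore is the element-wise minimizer of $\|\cdot\|_{H(\operatorname{div},K)}$ among all $\sigma_h \in R_h$ with $\trace(\sigma_h) = q$. Indeed, any other such $\sigma_h$ differs from $E_h q$ by a function in $R_p(K) \cap \Hodiv{K}$ on each $K$, which is $H(\operatorname{div},K)$-orthogonal to $E_h q$. So it suffices to exhibit, for each $K$, \emph{some} discrete extension $F \in R_p(K)$ of $q|_{\partial K}$ satisfying $\|F\|_{\hdiv K} \le c\,\|Eq\|_{\hdiv K}$, and then sum over elements.

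The construction is done on the reference element. Set $\sigma = Eq|_K$, pull back via $\hat\sigma = \Phi^*\sigma \in H(\operatorname{div},\hat K)$, and note that $\hat\sigma_n = \hat n\cdot\hat\sigma|_{\d\hat K}$ lies in $Q_p(\d\hat K)$ since $q\in Q_h$. Decompose the normal trace into mean and zero-mean parts and apply the two lemmas just proved:
\[
\hat F \;=\; \hG\bar\sigma \;+\; \hE(\hat\sigma_n - \bar\sigma) \;\in\; R_p(\hat K),
\qquad \hat n\cdot \hat F|_{\d\hat K}=\hat\sigma_n.
\]
Lemma~\ref{lem:G} bounds $\hG\bar\sigma$ in $\hdiv{\hat K}$ by $\|\operatorname{div}\hat\sigma\|$, Lemma~\ref{lem:E} bounds $\hE(\hat\sigma_n-\bar\sigma)$ in $L^2(\hat K)$ by $\|\hat\sigma\|_{\hdiv{\hat K}}$ and contributes nothing to the divergence, so $\|\hat F\|_{L^2(\hat K)}\le c\|\hat\sigma\|_{\hdiv{\hat K}}$ and $\|\operatorname{div}\hat F\|_{L^2(\hat K)}\le c\|\operatorname{div}\hat\sigma\|_{L^2(\hat K)}$. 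Push $\hat F$ forward by the Piola map to get $F = \Phi_*\hat F \in R_p(K)$, which preserves the normal trace, and apply the scaling inequalities~\eqref{eq:scaling} in both directions. The divergence estimate combines cleanly because the two Jacobian factors for $\operatorname{div}$ cancel, giving $\|\operatorname{div} F\|_{L^2(K)}\le c\|\operatorname{div}\sigma\|_{L^2(K)}$; the $L^2$ estimate likewise yields $\|F\|_{L^2(K)}^2 \le c\,(\|\sigma\|_{L^2(K)}^2 + h_K^2\|\operatorname{div}\sigma\|_{L^2(K)}^2)$, which is controlled by $\|\sigma\|_{\hdiv K}^2$ since $h_K$ is bounded by the domain diameter.

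Summing over all elements and using the element-wise minimality of $E_h q$ gives
\[
\|q\|_{Q_h}^2 = \sum_{K}\|E_h q\|_{\hdiv K}^2 \le \sum_K \|F\|_{\hdiv K}^2 \le c \sum_K \|Eq\|_{\hdiv K}^2 = c\,\|Eq\|_{\Hdiv}^2 = c\,\|q\|_Q^2.
\]
The main technical obstacle is ensuring that the constant is independent of both $h$ and $p$: the $p$-independence relies entirely on the polynomial extension operator $\extension$ from \cite{demkowicz-gopalakrishnan-polynomial-extension-iii} used in Lemma~\ref{lem:E}, while the $h$-independence comes from carefully tracking the Piola scalings so that the factors $|K|$ and $h_K^2/|K|$ cancel against one another, with shape regularity controlling the residual geometric constants.
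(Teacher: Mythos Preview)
Your proposal is correct and follows essentially the same approach as the paper: both prove the lower bound by noting $E_h q$ is admissible in the infimum defining $\|q\|_Q$, and both prove the upper bound by pulling back $\sigma = (Eq)|_K$ to the reference element, constructing the discrete extension $\hat F = \hG\bar\sigma + \hE(\hat\sigma_n-\bar\sigma)$ via Lemmas~\ref{lem:G} and~\ref{lem:E}, pushing forward by Piola, and applying the scaling estimates~\eqref{eq:scaling}. Your explicit remark that $E_h q$ is the element-wise $\hdiv K$-minimizer among discrete extensions (by orthogonality to interior bubbles) is exactly what justifies the paper's final line $\|q\|_{Q_h}\le\|\FF q\|_{\Hdiv}$.
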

\begin{proof}
  The lower inequality follows from
  \[
  \| q\|_Q = \inf_{ \tau \in \trace^{-1}\{ q\} } \|\tau \|_{\Hdiv}
  \le
  \inf_{ \tau_h \in R_h \cap \trace^{-1}\{ q\} } \|\tau_h\|_{\Hdiv}  =
  \| q \|_{Q_h}.
  \]

  To prove the upper inequality,
  pick any $K \in \oh$, set
  \[
  \sigma = (Eq)|_K, \quad \hsig = \Phi^*\sigma,
  \quad   \FF_K q = \Phi_* \hF \hsign,
  \]
  where $\hG$ and $\hE$ are as given by
  Lemmas~\ref{lem:G} and \ref{lem:E}, $E$ is the minimal extension
  in~\eqref{eq:Eq}, and
  \[
  \hF \hsign = \hG \bsig  + \hE ( \hsign - \bsig).
  \]
  Clearly, $n \cdot (\FF_K q)|_{\d K} = q$ and the function $\FF q$,
  defined by $(\FF q)|_K = \FF_K q$ for all $K \in \oh$, is in
  $\Hdiv.$ Moreover the estimates of Lemmas~\ref{lem:G}
  and~\ref{lem:E}, together with~\eqref{eq:scaling}, imply
  \begin{align*}
    \| \FF q \|_{L^2(K)}^2
    &
      \le  c \, \frac{h_K^2}{|K|}
      \left( \| \hG \bsig \|_{L^2(\hK)}^2 + \| \hE (\hsign - \bsig)
      \|_{L^2(\hK)}^2
      \right)
     \le c \, \frac{h_K^2}{|K|} \| \hsig \|_{\hdiv \hK}^2
    \\
    &
      = c \, \frac{h_K^2}{|K|} \| \Phi^* \sigma \|_{\hdiv \hK}^2
      \le c \left(  \| \sigma \|_{L^2(K)}^2
      + h_K^2 \|\dive \sigma \|_{L^2(K)}^2\right),
    \\
    \|\dive \FF q \|_{L^2(K)}^2
    & \le \frac{c}{|K|} \| \dive (\hG \bsig) \|_{L^2(\hK)}^2
      \le  \frac{c}{|K|} \| \dive\hsig \|_{L^2(\hK)}^2
      =  \frac{c}{|K|} \| \dive(\Phi^* \sigma) \|_{L^2(\hK)}^2
      \\
    & \le  c \| \dive \sigma \|_{L^2(K)}^2.
  \end{align*}
  We have thus obtained,
  for any $q \in Q_h$, an extension $\FF q \in R_h$ satisfying
  \[
  \| \FF q \|_{\Hdiv} \le c \| \sigma \|_{\Hdiv} = c \| Eq \|_{\Hdiv}
  = c\| q \|_Q.
  \]
  Since $\| q \|_{Q_h}$ is the infimum of $\| \tau_h \|_{\Hdiv}$ over
  all $\tau_h \in R_h$ satisfying $\trace \tau_h = q$, the inequality
  $\| q\|_{Q_h} \le \| \FF q \|_{\Hdiv}$ holds and completes the
  proof.
\end{proof}

\subsection{General meshes}

We now briefly remark on how the norm equivalence of
Theorem~\ref{thm:tet} may be extended to more general elements and
meshes.  While a general theorem for all element
shapes is beyond the scope of this paper, we wish to provide pointers
on what arguments need extension. The proof of Theorem~\ref{thm:tet}
depends on three ingredients: {\em (a)} Lemma~\ref{lem:G}, {\em (b)}
Lemma~\ref{lem:E}, and {\em (c)} the scaling
estimates~\eqref{eq:scaling}.  Moving from tetrahedral to other
element shapes, we must first obtain generalizations of the extension
operators of Lemmas~\ref{lem:G} and~\ref{lem:E} on the reference
element~$\hK$ for the new shapes. We show how this can be done for two
other element shapes, one in two dimensions and another in three dimensions.

{\em Triangles:} The extension $\hG$ constructed in the proof of
Lemma~\ref{lem:G} continues to work for the unit triangle if we set
$\hx_I$ to be the center of the inscribed circle of the triangle. As for
Lemma~\ref{lem:E}, if $\hK$ is a triangle, then the extension
of~\cite[Corollary~2.2]{AinswDemko09} has all the properties stated in
the lemma.

{\em Cubes:} To obtain the result of Lemma~\ref{lem:G} when $\hK$ is
the unit cube, we set $\hx_I = (1/2, 1/2, 1/2)$ and
$\hG \bsig = 2 (\hx - \hx_I) \bsig$. Then proceeding as
in~\eqref{eq:Gdiv}, we obtain the result.  The extension operators
constructed in~\cite{CostaDaugeDemko08} for each $p$ provide the
required $\hE$ in Lemma~\ref{lem:E} when $\hK$ is a cube.

The scaling estimates~\eqref{eq:scaling} are valid for affine mappings
$\Phi$. We next comment on meshes with curved elements, which are images
of reference elements under a possibly nonlinear $\Phi$. If $\Phi$ is such
that the estimates of~\eqref{eq:scaling} with a properly (re)defined
$h_K$ and $|K|$ for curvilinear elements~$K$ hold, then the proof of
Theorem~\ref{thm:tet} can be generalized. Examples of nonlinear $\Phi$
where such geometrical quantities can be identified can be found
in~\cite{Berna89,mixed-fem-book}.

\section{An algebraic Schur complement result}  \label{sec:schur-result}

The purpose of this section, which can be read independently of
the rest of the paper, is to present a simple matrix result, whose
relevance to our problem will be clear in the next section. The result
is a generalization
of~\cite[Lemma~4.2]{brunner-kolev-amg-explicit}. Suppose
$i \cup f = \{1, 2, \ldots, m\}$ and $j \cup e = \{1, 2, \ldots, l\}$
are disjoint partitions of two index sets.  Let $\dof D$ be an
$m \times m$ symmetric positive definite matrix and $\dof H$ be an
$m \times l$ matrix (both with real entries).
We use standard block notations, e.g.,
$\dof{x}_f$ denotes the restriction of a vector $\dof{x}$ to
$f$-indices, and the matrices have block forms
\begin{equation}
  \label{eq:DandT}
  \dof{D} = \left[ \begin{array}{cc}
    \dof{D}_{ii}&  \dof{D}_{if} \\
    \dof{D}_{fi}&  \dof{D}_{f\!f}
  \end{array} \right], \quad
  \dof{H} = \left[ \begin{array}{cc}
    \dof{H}_{ij}&  \dof{H}_{ie} \\
    \dof{H}_{fj}&  \dof{H}_{fe}
  \end{array} \right].
\end{equation}
Define $\dof{S}$ to be the Schur complement
$ \dof{S} = \dof{D}_{f\!f} - \dof{D}_{fi} \dof{D}_{ii}^{-1}
\dof{D}_{if}.$
Let $\diag(\dD)$ denote diagonal matrix formed from the diagonal of
$\dD$.

\begin{lemma} \label{lem:alg_schur} %
  Suppose there is a $c_4>0$ such that every $ \du \in \RRR^m$ can
  be decomposed as $ \du = \dv + \dH \dr$, for some
  $\dv \in \RRR^m$ and $ \dr \in \RRR^l,$ such that
  \begin{equation}
    \label{eq:voldecomp}
    (\diag(\dD) \dv, \dv) +
    (\dD \dH \dr, \dof H \dr ) \le c_4 ( \dD \du, \du).
  \end{equation}
  Then for any $\du \in \RRR^m$ there exist $\dv \in \RRR^m$ and $\dr\in\RRR^l$
  (not necessarily the same as in the assumption), depending only on $\du_f$,
  such that the decomposition $\du_f = \dv_f + [\dH \dr]_f$ holds and satisfies
  \[
  (\diag(\dS) \dv_f, \dv_f) +
  ( \dS [\dH \dr]_f, [\dH \dr]_f)
  \le c_4 (\dS \du_f, \du_f).
  \]
\end{lemma}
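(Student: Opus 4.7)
The plan is to reduce the conclusion to the hypothesis by using the \emph{harmonic extension} of $\du_f$. Given $\du_f$, define $\tilde\du \in \RRR^m$ by $\tilde\du_f = \du_f$ and $\tilde\du_i = -\dD_{ii}^{-1}\dD_{if}\du_f$. A direct block computation gives $(\dD\tilde\du,\tilde\du) = (\dS\du_f,\du_f)$, so $\tilde\du$ realizes the Schur-complement energy of $\du_f$. Apply the hypothesis to $\tilde\du$ to obtain a decomposition $\tilde\du = \tilde\dv + \dH\tilde\dr$ satisfying
\[
(\diag(\dD)\tilde\dv,\tilde\dv) + (\dD\dH\tilde\dr,\dH\tilde\dr) \;\le\; c_4(\dD\tilde\du,\tilde\du) \;=\; c_4(\dS\du_f,\du_f),
\]
and restrict to the $f$-indices to get $\du_f = \tilde\dv_f + [\dH\tilde\dr]_f$. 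Since $\tilde\du$ depends only on $\du_f$, so do $\tilde\dv$ and $\tilde\dr$.

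It remains to dominate the two terms on the left of the desired inequality by the corresponding terms of the hypothesis. For the second term, I would use the variational characterization of the Schur complement: for any $\dw \in \RRR^m$ one has $(\dS\dw_f,\dw_f) \le (\dD\dw,\dw)$, with equality exactly when $\dw$ is the harmonic extension of $\dw_f$. Applying this to $\dw = \dH\tilde\dr$ yields
\[
(\dS[\dH\tilde\dr]_f, [\dH\tilde\dr]_f) \;\le\; (\dD\dH\tilde\dr,\dH\tilde\dr).
\]
For the first term, observe that for each $k\in f$, $[\dS]_{kk} = [\dD]_{kk} - [\dD_{fi}\dD_{ii}^{-1}\dD_{if}]_{kk} \le [\dD]_{kk}$, because $\dD_{fi}\dD_{ii}^{-1}\dD_{if}$ is symmetric positive semidefinite and hence has non-negative diagonal. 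Consequently $(\diag(\dS)\tilde\dv_f,\tilde\dv_f) \le ([\diag(\dD)]_{ff}\tilde\dv_f,\tilde\dv_f) \le (\diag(\dD)\tilde\dv,\tilde\dv)$. Adding the two bounds and inserting the hypothesis estimate finishes the proof.

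I do not expect any real obstacle: the argument is a couple of lines of linear algebra once the harmonic extension is introduced. The only subtlety worth flagging is the clause ``depending only on $\du_f$'' in the conclusion; this is automatic in the construction above, because $\tilde\du$ is a function of $\du_f$ alone and the hypothesis is then applied to that fixed vector. A secondary point is simply to verify carefully that both of the comparison inequalities needed (the diagonal comparison $[\diag(\dS)]_{kk} \le [\diag(\dD)]_{kk}$ and the energy comparison $(\dS\dw_f,\dw_f) \le (\dD\dw,\dw)$) hold with \emph{unit} constant, which is what allows the same $c_4$ to be propagated from the hypothesis to the conclusion.
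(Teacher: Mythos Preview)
Your proposal is correct and follows essentially the same route as the paper: introduce the harmonic extension $\tilde{\du}=\extmat\du_f$, apply the hypothesis to it, then bound the Schur-complement terms by the full $\dD$-terms using the variational characterization $(\dS\dw_f,\dw_f)\le(\dD\dw,\dw)$ and the diagonal comparison $[\dS]_{kk}\le[\dD_{f\!f}]_{kk}$. The only cosmetic difference is that the paper derives the diagonal comparison from the same variational inequality (taking $\dx=[\dof 0;\dof e_k]$), whereas you use positive semidefiniteness of $\dD_{fi}\dD_{ii}^{-1}\dD_{if}$ directly; both yield the unit-constant bounds you need.
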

\begin{proof}
Let $\extmat$ be the matrix representation of the extension operator $E_h$
\[
\extmat =
\begin{bmatrix}
 -\dof{D}_{ii}^{-1} \dof{D}_{if} \\
\dof{I}_{f\!f}
\end{bmatrix}.
\]
Since $ \dof{S} = \extmat^T \dof{D} \extmat$,
from the well-known properties of Schur complements
\begin{equation}
  (\dof{S} \dof{x}_f, \dof{x}_f)  =
  (\dof{D} \extmat \dof{x}_f, \extmat \dof{x}_f)
   = \inf_{  \{ \dy \in \RRR^m :\, \dy_f = \dx_f\} }
   (\dof{D} \dy, \dy) \le (\dD \dx, \dx),
  \qquad \forall \dx \in \RRR^m.
  \label{eq:schurmin2}
\end{equation}

Now, given any $\du \in\RRR^m$, let us set $\dw = \extmat \du_f$ and let
$\dv\in\RRR^m$, $\dr\in\RRR^l$ be such that $\dw = \dv + \dH\dr$
(and in particular $\du_f = \dw_f = \dv_f + [\dH\dr]_f$) and
\begin{equation}\label{eq:lemma41est1}
(\diag(\dD) \dv, \dv) +
( \dD \dH \dr, \dH \dr)
\le c_4 (\dD \dw, \dw).
\end{equation}
By~\eqref{eq:schurmin2}, with $\dx=\dH\dr$
\begin{equation}
  \label{eq:3}
  ( \dS [\dH \dr]_f, [\dH \dr]_f)
  \le
  ( \dD \dH \dr, \dH \dr).
\end{equation}
Next, consider the $k$-th diagonal entry of $\dS$ which can be expressed as
$\dS_{kk} = (\dS \dof e_k, \dof e_k)$ where $\dof e_k$ is the vector with
entries $[\dof e_k]_s = \delta_{ks}$. Setting $\dx^T=[ \dof{0}^T \; \dof{e}_k^T ]$
in \eqref{eq:schurmin2}, we get
\[
\dS_{kk} = (\dS \dof e_k, \dof e_k) \le (\dD_{f\!f} \dof e_k, \dof e_k ) =
[\dD_{f\!f}]_{kk}\,.
\]
Since all diagonal entries of $\dD$ are positive, we conclude that
\begin{equation}\label{eq:lemma41est2}
(\diag(\dS) \dv_f, \dv_f) \le
(\diag(\dD_{f\!f}) \dv_f, \dv_f) \le
(\diag(\dD) \dv, \dv).
\end{equation}
Adding the estimates \eqref{eq:3} and \eqref{eq:lemma41est2} and then using
\eqref{eq:lemma41est1} we arrive at
\[
(\diag(\dS) \dv_f, \dv_f) + ( \dS [\dH \dr]_f, [\dH \dr]_f) \le
(\diag(\dD) \dv, \dv) + ( \dD \dH \dr, \dH \dr) \le
c_4 (\dD \dw, \dw).
\]
Noting that $(\dD \dw, \dw) = (\dD \extmat \du_f, \extmat \du_f) =
(\dS \du_f, \du_f)$ completes the proof.
\end{proof}

The statement of Lemma \ref{lem:alg_schur} can be easily extended to the case
of more than one matrix $\dH$: assume that we have a sequence of real matrices
$\dH_k$ with dimensions $m\times l_k$, $k=1,\ldots,n$.
\begin{corollary}\label{corrollary-decomp}
Suppose there is $c_4>0$ such that for all $\du\in\RRR^m$ there exist
$\dv\in\RRR^m$ and $\dr_k \in\RRR^{l_k}$, $k=1,\ldots,n$, such that
\[
  \du = \dv + \sum_{k=1}^n \dH_k \dr_k \,,
    \qquad\text{and}\qquad
  (\diag(\dD) \dv, \dv) + \sum_{k=1}^n (\dD \dH_k \dr_k, \dH_k \dr_k) \le
  c_4 (\dD \du, \du).
\]
Then for any $\du \in \RRR^m$ there exist $\dv \in \RRR^m$ and
$\dr_k\in\RRR^{l_k}$, $k=1,\ldots,n$ (not necessarily the same as in the
assumption), depending only on $\du_f$, such that
\[
  \du_f = \dv_f + \sum_{k=1}^n [\dH_k \dr_k]_f \,,
  \quad\text{and}\quad
  (\diag(\dS) \dv_f, \dv_f) +
  \sum_{k=1}^n (\dS [\dH_k \dr_k]_f, [\dH_k \dr_k]_f) \le
  c_4 (\dS \du_f, \du_f).
\]
\end{corollary}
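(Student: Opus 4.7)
The plan is to mirror the proof of Lemma~\ref{lem:alg_schur} essentially verbatim, since the argument there never uses that there is only a single matrix $\dH$: it applies separately to each summand, and the Schur complement minimization \eqref{eq:schurmin2} is what couples things back to $\dS$.

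First I would keep the same extension matrix
\[
\extmat = \begin{bmatrix} -\dD_{ii}^{-1}\dD_{if} \\ \dI_{f\!f} \end{bmatrix},
\]
so that $\dS = \extmat^T \dD \extmat$ and the identity $(\dS \dx_f, \dx_f) \le (\dD \dx, \dx)$ for all $\dx \in \RRR^m$, with equality when $\dx = \extmat \dx_f$, remains available exactly as in \eqref{eq:schurmin2}. Given $\du \in \RRR^m$, I would set $\dw = \extmat \du_f$ (note this depends only on $\du_f$) and apply the corollary's hypothesis to $\dw$: this produces $\dv \in \RRR^m$ and $\dr_k \in \RRR^{l_k}$ with $\dw = \dv + \sum_{k=1}^n \dH_k \dr_k$ and
\[
(\diag(\dD)\dv, \dv) + \sum_{k=1}^n (\dD \dH_k \dr_k, \dH_k \dr_k) \le c_4 (\dD \dw, \dw).
\]
Restricting to $f$-indices gives $\du_f = \dw_f = \dv_f + \sum_{k=1}^n [\dH_k \dr_k]_f$, which is the decomposition demanded by the conclusion.

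For the energy estimate, I would apply \eqref{eq:schurmin2} with $\dx = \dH_k \dr_k$ separately for each $k$ to obtain $(\dS [\dH_k \dr_k]_f, [\dH_k \dr_k]_f) \le (\dD \dH_k \dr_k, \dH_k \dr_k)$, and reuse \eqref{eq:lemma41est2} verbatim for the diagonal bound $(\diag(\dS)\dv_f,\dv_f) \le (\diag(\dD)\dv,\dv)$. Summing these $n+1$ inequalities and invoking the hypothesis yields
\[
(\diag(\dS)\dv_f, \dv_f) + \sum_{k=1}^n (\dS [\dH_k \dr_k]_f, [\dH_k \dr_k]_f) \le c_4(\dD \dw, \dw),
\]
and the proof closes by observing, as at the end of Lemma~\ref{lem:alg_schur}, that $(\dD\dw,\dw) = (\dD\extmat\du_f,\extmat\du_f) = (\dS\du_f,\du_f)$.

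There is no real obstacle here. The one point worth flagging is why this is not just a trivial consequence of applying Lemma~\ref{lem:alg_schur} to the block matrix $\dH = [\dH_1 \;\cdots\; \dH_n]$ with $\dr^T = [\dr_1^T \;\cdots\; \dr_n^T]$: doing so would require controlling $(\dD \dH\dr, \dH\dr)$, which contains cross terms $(\dD \dH_k\dr_k, \dH_l\dr_l)$ that the hypothesis does not bound. The advantage of repeating the argument is precisely that \eqref{eq:schurmin2} is applied block-by-block, so only the diagonal $\dD$-energy of each $\dH_k\dr_k$ is ever needed.
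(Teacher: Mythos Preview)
Your proposal is correct and matches the paper's intent: the paper does not give an independent proof of the corollary but simply remarks that Lemma~\ref{lem:alg_schur} ``can be easily extended to the case of more than one matrix $\dH$,'' and your write-up carries out exactly that extension by repeating the lemma's argument term-by-term. Your closing observation about why one cannot simply apply Lemma~\ref{lem:alg_schur} to the stacked matrix $\dH = [\dH_1\;\cdots\;\dH_n]$ is a nice clarification that the paper leaves implicit.
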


\section{Preconditioning the $Q_h$-norm using
an interface decomposition} % of the Schur complement
 \label{sec:ads}

In Section~\ref{sec:qnorm}, we reduced the problem of preconditioning $\|\cdot\|_Q^2$ to that
of preconditioning $\|\cdot\|_{Q_h}^2$. In this section we propose
a scalable method for preconditioning the $Q_h$-norm,
by further reducing the problem to
that of preconditioning the Gram matrix of the $ \Hdiv $
inner product. Such matrices can be efficiently handled by recent algebraic multigrid techniques
\cite{kolev-vassilevski-ads}, resulting ultimately in a good preconditioner for the DPG matrix
$\dof{A}$, as shown in the next section.

Let $\{ r_m \}$ denote a finite element basis of $R_h$. Define
$\dof{D}$ to be the Gram matrix of the $ \Hdiv $ inner product in the
$ \{ r_m\} $ basis.  We partition the degrees of freedom of
$ \{ r_m\} $ into those associated with the interior of elements --
denoted by $i$ --
and those on the element interfaces
-- denoted by $f$ -- and block partition $\dD$
as in~\eqref{eq:DandT}.  Recall the notational conventions from
Section~\ref{sec:dpg} that allow us to move from functions $q$ to their
vector representations $\dof q$ using appropriate basis expansions.
As already noted
in~\eqref{eq:schurmin2}, the Schur complement
$ \dof{S} = \dof{D}_{f\!f} - \dof{D}_{fi} \dof{D}_{ii}^{-1}
\dof{D}_{if} $ satisfies
\begin{equation} \label{eq:S}
  (\dof{S} \dof{q}, \dof{q}) =
\inf_{ \{ r \in R_h : \; \dof{r}_f = \dof{q} \} } (\dof{D} \dof{r}, \dof{r})
= \| E_h q \|^2_{\Hdiv}
% = \inf_{ \{ \mathring r_h \in R_h : \trace(\mathring r_h) = 0 \} }
%       \| q + \mathring r_h \|^2_{\Hdiv}
= \| q \|^2_{\interfacespace},
\end{equation}
i.e., to precondition the $Q_h$-norm we need to construct
a good preconditioner for $ \dof{S} $.

The characterization of the $Q_h$-norm in terms of an $ H(\div) $-norm suggests the use
of an $ H(\div)$ preconditioner. Indeed, if $\dT =
\begin{bmatrix}
  \dof{0}_{fi} & \dI_{f\!f}
\end{bmatrix}$
denotes the restriction operator such that $ \dT \dr = \dr_f$ for all
$r \in R_h,$ then it follows from
\[
\dD^{-1} =
\begin{bmatrix}
  \dI & -{\dD}_{ii}^{-1} {\dD}_{if} \\
  0 & \dI
\end{bmatrix}
\begin{bmatrix}
  {\dD}_{ii}^{-1} & 0 \\
  0 & {\dS}^{-1}
\end{bmatrix}
\begin{bmatrix}
  \dI & 0 \\
  -{\dD}_{fi}{\dD}_{ii}^{-1} & \dI
\end{bmatrix}
\]
that $\dS^{-1} = [\dD^{-1}]_{f\!f} = \dT \dD^{-1} \dT^T$.
%, and so $(\dS^{-1}\dr_f, \dr_f) = (\dT \dD^{-1} \dT^T \dr_f, \dr_f)$.
Thus, replacing $\dD^{-1}$ by any spectrally equivalent
$\Hdiv$-preconditioner will give us a spectrally equivalent
preconditioner for $\dS$.
In particular, we may use the
Auxiliary-space Divergence Solver (ADS) of
\cite{kolev-vassilevski-ads}.

It is well known that ADS is a good preconditioner for many problems
set in the $\Hdiv$-conforming space $ \rt$.  However, we want to
precondition the interface operator $\dS$ using only the interface
degrees of freedom.  The ADS preconditioner when applied to $R_h$ uses
all degrees of freedom of $R_h$, and not merely the interface degrees
of freedom in $Q_h$. This can become a significant addition to the
cost as the order $p$ increases.

What can we expect when the algebraic ADS is directly applied
to the interface space $Q_h$? To answer this, we examine below the
stable decomposition underpinning the theory of ADS and employ
Corollary~\ref{corrollary-decomp} to get an analogous stable decomposition
restricted to the interface.
For simplicity, we now focus  on the three-dimensional case.
(The two-dimensional case is similar once curl is properly defined.)
% The $\curl$ operator, which is standard in three
% dimensions $(d=3)$, is as usual set to the rotated gradient when
% $d=2$.
Let $N_h$ denote the $H(\curl)$-conforming Nedelec space of the
first kind on the same mesh, which is in correspondence with $R_h$ in
the standard finite element exact sequence.

The additive variant of ADS provides a preconditioner for
$\dof D$ in the form
\begin{equation} \label{adsprec}
  \adsprec = \dof{R} +
  \projmatrix \,\dof{B}^{\projmatrix}\, \projmatrix^T +
  \curlmatrix\, \dof{B}^{\curlmatrix}\, \curlmatrix^T
\end{equation}
where the ingredients are as follows:
\begin{enumerate}
\item $\dof{R}$ is a simple smoother for the global matrix $\dof{D}$,
  for example,
one symmetrized  Gauss-Seidel iteration.
\item $ \projmatrix $ is the matrix representation of the
  Raviart-Thomas interpolation operator
  from $\trialspace \times \trialspace \times \trialspace$ (or simply $U_h^3$)
  % (the $d$-fold Cartesian product of $U_h$)
  to $\rt$ obtained using a standard  basis $\{ u_l \}$ of
  $\trialspace$ and the basis $\{ r_m \}$ of $\rt$.
  % Recall that $\trialspace\subset H_0^1(\om)$ and $\rt\subset \Hdiv$.
\item $ \curlmatrix$ is the matrix representation of $ \curl: N_h \to  R_h $
  using a standard  basis $\{ n_k \}$ of $\nedelec$ and the basis $\{ r_m \}$
  of $\rt$.
\item $ \dof{B}^{\projmatrix} $ is a standard algebraic $ H^1 $ solver, for
  example BoomerAMG from \cite{henson-yang-boomeramg, scaling_2012}, applied to
  the matrix $\projmatrix^T \dof{D} \projmatrix$.
\item $ \dof{B}^{\curlmatrix} $ is an algebraic Maxwell solver, such
  as the auxiliary space Maxwell solver of
  \cite{kolev-vassilevski-ams} applied to $\curlmatrix^T \dof{D}
  \curlmatrix$.
  % in the $d=3$ case, or a similar construction in $d=2$
  % case using an algebraic $H^1$ solver.
\end{enumerate}

Just as we partitioned the degrees of freedom of $R_h$ into interior
($i$) and interface ($f$) ones, we can partition the degrees of
freedom of $U_h^3$ into its interior $\ju$ and its interface $(\eu)$
degrees of freedom. Similarly the degrees of freedom of $N_h$ are
partitioned into sets $\jn$ (interior) and $\en$ (interface).  An
important property of the matrices $\projmatrix$ and $\curlmatrix$ is
that when we decompose them into the interior and interface degrees of
freedom, their block form is
\begin{equation}
  \projmatrix = \left[ \begin{array}{cc} \projmatrix_{i\ju} & \projmatrix_{i\!\eu} \\
                         \dof 0& \projmatrix_{f\!\eu} \end{array} \right], \quad
  \curlmatrix = \left[ \begin{array}{cc} \curlmatrix_{i\jn} & \curlmatrix_{i\!\en} \\
                         \dof 0& \curlmatrix_{f\!\en} \end{array} \right].
  \label{eq:boundaryop}
\end{equation}
The fact that $\dof{\Pi}_{f\ju}$ and $\dof{C}_{f\jn}$
are zero blocks follows from the definition of
the finite element spaces $ \lagrange, \rt $,
$ \nedelec$ and their degrees of freedom,
 e.g., the $R_h$ degrees of freedom on a face
for the curl of a function in $N_h$ depend only on the $N_h$
degrees of freedom associated with that face.

The rationale behind the preconditioner construction
in~\eqref{adsprec} comes from the theory of auxiliary space
preconditioners~\cite{HiptmXu07}. For example,
it is possible to prove \cite[Section 5.2]{kolev-vassilevski-ads}
under further simplifying assumptions % (on the mesh etc.)
that
any $ u \in \rt $ can be decomposed into
\begin{subequations}
  \label{eq:decompose}
\begin{equation}
  \dof{u} = \dof{v} + \projmatrix \dof{z} + \curlmatrix \dof{y}
  \label{eq:matrix-hx}
\end{equation}
with % z \in \lagrange^d,
$z \in \lagrange \times \lagrange \times \lagrange$,
$y \in \nedelec, $ and $ v \in \rt $ such that
\begin{equation}
  (\dof{diag}(\dof{D}) \dof{v}, \dof{v}) + (\dof{D} \projmatrix \dof{z}, \projmatrix \dof{z}) +
  (\dof{D} \curlmatrix \dof{y}, \curlmatrix \dof{y}) \leq c_5 (\dof{D} \dof{u}, \dof{u})
  \label{eq:hx-estimate}
\end{equation}
\end{subequations}
where
$ c_5>0 $ is a constant
independent of the size of the problem.
This is enough to conclude~\cite{Nepom07} that
 $\dof{B}$ is a good preconditioner
for $\dof{D}^{-1}$ (and the ``goodness'' is measured by $c_5$
as the condition number
of the preconditioned system increases  with $c_5$).
In practice, $\dof{B}$ often serves a good preconditioner
for
$\dof{D}^{-1}$ even when a rigorous proof of~\eqref{eq:decompose}
is  difficult (such as for non-conforming
irregular meshes and  discontinuous  material coefficients).
Loosely speaking,~\eqref{eq:decompose}
means that $\du$ can be decomposed
into well-behaved components
in the ranges of $\projmatrix$ and $\curlmatrix$ with a
small remainder $\dv.$

% the  $\curl$ operator on a $\rt$ face degree of freedom depends only on $\nedelec$ degrees of freedom associated with that face.
%% We note that $ \projmatrix_{f\!f} $ and $ \curlmatrix_{f\!f} $ are explicitly available in our finite element package MFEM.
%% If they are not explicitly available in your finite element package, they are not terribly difficult to construct from basic geometric and connectivity information. % , or you can just switch to MFEM.

When a purely algebraic implementation of ADS is applied to $\dS$, it
results in the preconditioner
\begin{equation}
  \label{eq:Bf}
  \dB^f = \dR^f + \dPi_{f\!\eu} \dB^\projmatrix_{\eu\!\eu} \dPi_{f\!\eu}^T
+ \dC_{f\!\en} \dB^\dC_{\en\!\en} \dC_{f\!\en}^T
\end{equation}
which {\em uses only the interface degrees of freedom} of all the spaces
involved. Here $\dR^f$ is a simple point smoother, like the
symmetrized Gauss-Seidel iteration, applied to~$\dS$.  Just
as~\eqref{eq:decompose} implies that $\dB$ is a good preconditioner
for $\dD$, a stable interface decomposition is required for $\dB^f$
to be a good preconditioner for $\dS$. We will now show that the
decomposition~\eqref{eq:decompose} implies a stable interface decomposition.

\begin{lemma}     \label{lemma:reducedads} %
  If~\eqref{eq:decompose} holds,
  then
  any $q \in Q_h$ can be decomposed as
  \[
    \dof{q} = \dv_f + \projmatrix_{f\!\eu} \dof{z}_f + \curlmatrix_{f\!\en} \dof{y}_f
  \]
  where $v \in R_h$, $z \in U_h^3$ and $y \in N_h$ and their interface
  degrees of freedom satisfy
  \[
  (\diag(\dS) \dv_f, \dv_f) +
  (\dS \projmatrix_{f\!\eu} \dof{z}_{\eu}, \projmatrix_{f\!\eu} \dof{z}_{\eu}) +
  (\dof{S} \curlmatrix_{f\!\en} \dof{y}_{\en}, \curlmatrix_{f\!\en} \dof{y}_{\en})
  \leq c_5 (\dof{S} \dof{q}, \dof{q}).
  \]
\end{lemma}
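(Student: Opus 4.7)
The plan is to apply Corollary~\ref{corrollary-decomp} with $n=2$ auxiliary matrices, namely $\dH_1 = \projmatrix$ and $\dH_2 = \curlmatrix$, so that the hypothesis of the corollary (a stable decomposition of every $\du \in \RRR^m$) is exactly the assumed auxiliary-space decomposition~\eqref{eq:decompose}. Under this identification the constant $c_4$ from the corollary coincides with the constant $c_5$ from~\eqref{eq:hx-estimate}, which is the constant appearing in the conclusion of the lemma.

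Given $q \in Q_h$ with interface vector representation $\dq$, I would take any $\du \in \RRR^{\dim R_h}$ whose interface block satisfies $\du_f = \dq$ (for instance the extension $\du = \extmat \dq$ already used in the proof of Lemma~\ref{lem:alg_schur}). Applying Corollary~\ref{corrollary-decomp} produces $\dv \in \RRR^{\dim R_h}$, $\dz \in \RRR^{\dim U_h^3}$ and $\dy \in \RRR^{\dim N_h}$, depending only on $\dq$, such that
\[
\dq = \dv_f + [\projmatrix \dz]_f + [\curlmatrix \dy]_f
\]
and
\[
(\diag(\dS) \dv_f, \dv_f) + (\dS [\projmatrix \dz]_f, [\projmatrix \dz]_f) + (\dS [\curlmatrix \dy]_f, [\curlmatrix \dy]_f) \le c_5 (\dS \dq, \dq).
\]

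The last step is simply to rewrite the restricted matrix products in the form stated in the lemma. Using the block structure~\eqref{eq:boundaryop}, the vanishing of the $(f,\ju)$ block of $\projmatrix$ gives $[\projmatrix \dz]_f = \projmatrix_{f\!\eu}\, \dz_{\eu}$, and the vanishing of the $(f,\jn)$ block of $\curlmatrix$ gives $[\curlmatrix \dy]_f = \curlmatrix_{f\!\en}\, \dy_{\en}$. Substituting these two identities into the decomposition and the Schur-complement estimate above yields exactly the conclusion of Lemma~\ref{lemma:reducedads}.

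There is no real obstacle in this argument: once Corollary~\ref{corrollary-decomp} is available, the lemma is essentially a bookkeeping exercise. The only point worth emphasizing is the role played by the block structure~\eqref{eq:boundaryop}, which is what allows the restrictions $[\projmatrix \dz]_f$ and $[\curlmatrix \dy]_f$ to be expressed purely in terms of the interface degrees of freedom $\dz_{\eu}$ and $\dy_{\en}$ of the auxiliary spaces. Without this block-triangular property the restricted decomposition would still pick up contributions from the interior dofs $\dz_{\ju}$ and $\dy_{\jn}$, and the algebraic ADS preconditioner~\eqref{eq:Bf} acting solely on interface unknowns would not be justified.
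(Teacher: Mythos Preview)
Your proposal is correct and follows exactly the paper's own proof: apply Corollary~\ref{corrollary-decomp} with $\dH_1=\projmatrix$ and $\dH_2=\curlmatrix$, then use the block structure~\eqref{eq:boundaryop} to rewrite $[\projmatrix\dz]_f=\projmatrix_{f\!\eu}\dz_{\eu}$ and $[\curlmatrix\dy]_f=\curlmatrix_{f\!\en}\dy_{\en}$. Your additional remarks on the role of the block-triangular structure are apt but not needed for the argument itself.
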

\begin{proof}
Apply Corollary~\ref{corrollary-decomp} with $\dH_1 = \dPi$, and $\dH_2 = \dC$,
and observe that
$[\dPi \dof{z}]_f = \projmatrix_{f\!\eu} \dof{z}_{\eu}$ and
$[\dC \dof{y}]_f = \curlmatrix_{f\!\en} \dof{y}_{\en}$
due to~\eqref{eq:boundaryop}.
\end{proof}

Informally, the result of the lemma can be stated as follows: if ADS works for the matrix $\dof{D}$
(a volumetric discretization of $\| \cdot \|_{\Hdiv}$), it will also work for its Schur complement $\dof{S}$ (an
interfacial discretization of $\|\cdot\|_{Q_h}$). Since we assume the former, we can conclude that
ADS will be an effective preconditioner for $ \dof{S} $.

\section{Scalable preconditioner} \label{sec:prec}

We are now ready to put all the pieces together and define a scalable preconditioner for the
original DPG matrix $\dof{A}$.
Our basic premise
is that {\em (i)}~the algebraic ADS is a good solver for the
Gram matrix of the $\Hdiv$-inner product in $R_h$,
in the sense that~\eqref{eq:decompose}
holds, and {\em (ii)}~the algebraic solver
BoomerAMG~\cite{henson-yang-boomeramg}, denoted by
$\dB^o$, is a good preconditioner for
the
Gram matrix $\dof G$ of the
 $H^1(\om)$-inner product on $U_h$,
in the sense that the spectral condition
number is bounded independent of discretization size $h$ and polynomial order $p$, that is,
\begin{equation}
  \label{eq:amg_ass}
  \kappa(\dB^o \dof G) \le c_6.
\end{equation}
Combining this with the $\dB^f$ defined in~\eqref{eq:Bf}, we have the
following result.

\begin{theorem}
  Assume that~\eqref{eq:q-equivalence}, \eqref{eq:Q2Qh},
  \eqref{eq:decompose} and~\eqref{eq:amg_ass} hold. Then
  the block-diagonal matrix
\begin{equation}
  \left[ \begin{array}{cc}
    \dB^o& \\
    &  \dB^f
  \end{array} \right] \label{eq:ideal-preconditioner}
\end{equation}
is a preconditioner for \dof{A} and the condition number of the
preconditioned system depends only on $c_1, c_2, c_3, c_5$ and $c_6$.
\end{theorem}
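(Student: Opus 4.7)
The plan is to chain three spectral equivalences and conclude that the block-diagonal preconditioner~\eqref{eq:ideal-preconditioner} is spectrally equivalent to $\dof{A}^{-1}$ with constants depending only on $c_1,c_2,c_3,c_5,c_6$.

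First I would replace $\dof{A}$ by an ``ideal'' block-diagonal preconditioner $\mathrm{diag}(\dof{G},\dS)$. For $x=(u,q)\in X_h$ with representation $\dx=(\du,\dof{q})$, the identity $\|x\|_X^2=(\dof{G}\du,\du)+\|q\|_Q^2$ combined with~\eqref{eq:q-equivalence} gives $c_1\bigl((\dof{G}\du,\du)+\|q\|_Q^2\bigr)\le(\dof{A}\dx,\dx)\le c_2\bigl((\dof{G}\du,\du)+\|q\|_Q^2\bigr)$. The Schur-complement characterization~\eqref{eq:S} gives $\|q\|_{Q_h}^2=(\dS\dof{q},\dof{q})$, and~\eqref{eq:Q2Qh} yields $c_3^{-2}\|q\|_{Q_h}^2\le\|q\|_Q^2\le\|q\|_{Q_h}^2$; substituting shows that $\dof{A}$ is spectrally equivalent to $\mathrm{diag}(\dof{G},\dS)$ with a ratio controlled by $c_1,c_2,c_3$.

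Next I would apply the two sub-preconditioner estimates to the ideal preconditioner. Assumption~\eqref{eq:amg_ass} states directly that $\dB^o$ is spectrally equivalent to $\dof{G}^{-1}$ with constant $c_6$. For the interface block, Lemma~\ref{lemma:reducedads} converts the volumetric stable decomposition~\eqref{eq:decompose} into a stable decomposition of any interface vector $\dof{q}$ into contributions from the interface blocks $\dPi_{f\!\eu}$ and $\dC_{f\!\en}$, with the same constant $c_5$ and with $\dS$ in place of $\dof{D}$. The standard additive auxiliary-space preconditioning framework~\cite{HiptmXu07,Nepom07}, applied to $\dS$ with the subspace triple used in~\eqref{eq:Bf}, then upgrades this decomposition into the two-sided spectral bound $\dB^f\sim\dS^{-1}$, with ratio controlled by $c_5$ and by the (assumed) quality of the inner solvers $\dB^{\dPi}$ and $\dB^{\dC}$ that is subsumed in the ``ADS works'' hypothesis.

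Combining the ingredients gives $\mathrm{diag}(\dB^o,\dB^f)\sim\mathrm{diag}(\dof{G}^{-1},\dS^{-1})\sim\dof{A}^{-1}$, and tracking the constants throughout yields the claim. The only step that goes beyond direct assembly of stated assumptions is the appeal to auxiliary-space theory to turn the algebraic decomposition of Lemma~\ref{lemma:reducedads} into a spectral equivalence for $\dB^f$; there, the lower bound uses the stable decomposition directly, while the matching upper bound is a routine Cauchy--Schwarz estimate using boundedness of the smoother and of the prolongations $\dPi_{f\!\eu}$ and $\dC_{f\!\en}$.
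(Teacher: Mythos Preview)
Your proposal is correct and follows essentially the same route as the paper: use~\eqref{eq:q-equivalence} and~\eqref{eq:Q2Qh} to show $\dof{A}$ is spectrally equivalent to the block-diagonal operator with blocks $\dof{G}$ and $\dS$, then invoke~\eqref{eq:amg_ass} for the first block and Lemma~\ref{lemma:reducedads} (together with the auxiliary-space framework) for the second. The paper's proof is terser---it simply writes out the inequality~\eqref{eq:2} and then says ``the result follows from Lemma~\ref{lemma:reducedads} and~\eqref{eq:amg_ass}''---whereas you have spelled out more carefully how the stable decomposition is upgraded to a two-sided spectral equivalence for $\dB^f$, but the logical skeleton is identical.
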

\begin{proof}
From \eqref{eq:q-equivalence},
for any $x = (u, q) \in X$,
we have
$$
c_1 (\|u\|_{H^1(\om)}^2 + \|q\|_{Q}^2) \leq (\dof{A} \dof{x}, \dof{x}) \leq
c_2 (\|u\|_{H^1(\om)}^2 + \|q\|_{Q}^2).
$$
Using~\eqref{eq:Q2Qh},
\begin{equation}
  \label{eq:2}
c_1 \|u\|_{H^1(\om)}^2 + c_1c_3^{-2}\|q\|_{Q_h}^2
\leq (\dof{A} \dof{x}, \dof{x})
\le
c_2 \|u\|_{H^1(\om)}^2 + c_2 \|q\|_{Q_h}^2
\end{equation}
Hence the result follows from Lemma~\ref{lemma:reducedads} and~\eqref{eq:amg_ass}.
\end{proof}

In practice, the application of $\dB^o$ and $\dB^f$ requires the
availability of the Gram matrices $\dof G$ and $\dD$, which may be inconvenient.
What we have in hand is $\dof A$. Hence instead of
the preconditioner in~\eqref{eq:ideal-preconditioner}, we may
use the block preconditioner
\begin{equation*}
  \begin{bmatrix}
    \dP^o & \\
        & \dP^f
  \end{bmatrix}
\end{equation*}
where $\dP^o$ and $\dP^f$ are the
algebraic solvers BoomerAMG and ADS
applied directly to the principal minors of $\dof{A}$ corresponding to
$U_h$ and $Q_h$, namely to
$ \dof{A_0} = \dof{B}_0^T \dof{M}^{-1} \dof{B_0} $ and
$ \dof{A_1} = \dof{B}_1^T \dof{M}^{-1} \dof{B}_1$ respectively.
The justification for this comes from the observation
that by taking $q=0$ in~\eqref{eq:2}, we can conclude that
$$
c_1 \|u\|_{H^1(\om)}^2 \leq (\dof{A_0} \dof{u}, \dof{u}) \leq c_2 \|u\|_{H^1(\om)}^2,
$$
i.e., $ \dof{A_0} $ is spectrally equivalent to $\dof G$.
Similarly, by taking $u=0$  in~\eqref{eq:2}, we have
$$
c_1 c_3^{-2}\|q\|_{Q_h}^2 \leq (\dof{A_1} \dof{q}, \dof{q}) \leq c_2 \|q\|_{Q_h}^2.
$$
Thus instead of
preconditioning the  matrices $\dof G$
and $\dS$, whose quadratic forms give
the norms $\| \cdot \|_{H^1(\om)}^2$
and $\| \cdot \|_{Q_h}^2$ respectively,
we can
directly precondition their spectrally equivalent
principal minors $\dof{A}_0$ and $\dof{A}_1$.
In our implementation it is in fact straightforward to construct the Gram matrix $\dof G$,
and we do so in order to build the AMG preconditioner $\dB^o$, but we use the principal minor
$ \dof{A_1} $ to construct the ADS preconditioner $ \dP^f $, so that the preconditioner we
use in the numerical results below takes the form
\begin{equation}
  \label{eq:practical-preconditioner}
  \begin{bmatrix}
    \dB^o & \\
        & \dP^f
  \end{bmatrix}.
\end{equation}

\section{Numerical results} \label{sec:results}

In this section we report some numerical results with the proposed DPG
preconditioner that test its performance with respect to the mesh
size $h$, the polynomial order of the trial space $p$, as well as the
orders of the test and interfacial spaces. We also examine the parallel
scalability of the new algorithm and examine its behavior on more
challenging problems with unstructured meshes and large coefficient jumps.

We apply a Conjugate Gradients (CG) solver to the problem~\eqref{eq:dpgmat}
preconditioned with the preconditioner~\eqref{eq:practical-preconditioner}
where $\dB^o$ and $\dP^f$ use a single V-cycle of BoomerAMG and ADS respectively.
The CG relative tolerance we used was $10^{-6}$.

Our implementation is freely available in the MFEM finite element
library \cite{mfem} and we used a slightly modified version of MFEM's
parallel Example 8 (version 3.2) to perform the numerical experiments in this
section. Specific ADS and BoomerAMG parameters and additional details can
be found in the source code of that example.

\subsection{Scalability with respect to $h,p$ for structured mesh}
\label{sec:structured}

Here we solve the test problem \eqref{eq:poisson} on the domain
$ \domain = (0,1)^3 $ with constant coefficient $ \coeff = 1 $ meshed with a uniform hexahedral grid.
The right hand side $ f $ is set to the constant one and zero Dirichlet boundary
conditions are imposed on all of $ \partial \domain $.

Table \ref{tab:np16iterations} reports results for experiments with varying mesh size $h$ (reported as number of finite elements) and polynomial orders $p.$ The order $p$
sets the polynomial degree of $ \trialspace $ to $ p $,
the order of $ \interfacespace $ to $ p - 1 $, and the order of $ \testspace $ to $p + d - 1 $ where $ d = 3 $ is the spatial dimension of $ \domain $. As mentioned in Section~\ref{sec:dpg}, Assumption~\eqref{eq:q-equivalence} holds in this setting.
The table reports iteration counts as well as the average reduction factor in the PCG iteration.
We observe that both of these convergence metrics are quite stable with respect to $h$ and $p$.

\begin{table}
\caption{Number of CG iterations and average reduction factors per iteration (in parenthesis) for various $h$ and $p$ refinement levels.}
% Generated on 18 April 2016 at 07:50 on machine sierra6 by user barker29 with git at or near revision 9f52574.
\label{tab:np16iterations}
\begin{center}\begin{tabular}{llllll}
\toprule
  &  \multicolumn{5}{c}{order ($p$)} \\
  \cmidrule{2-6}
elements& 1& 2& 4& 6& 8 \\
\midrule
64    & \zz5 (0.06)&  \zz8 (0.14)&  12 (0.30)&  13 (0.34)&  13 (0.34) \\
512   & \zz7 (0.12)&  10 (0.23)&  12 (0.31)&  14 (0.36)& \zzz--- \\
4096  & \zz8 (0.18)&  10 (0.25)&  13 (0.33)& \zzz---& \zzz--- \\
32768 &   10 (0.22)&  10 (0.24)& \zzz---& \zzz---& \zzz--- \\
262144&   10 (0.22)&  \zzz---& \zzz---& \zzz---& \zzz--- \\
\bottomrule
\end{tabular}\end{center}
\end{table}

%% \begin{table}
%% \caption{Average reduction factor per iteration for various $h$ and $p$ refinement levels.}
%% % Generated on 18 April 2016 at 07:50 on machine sierra6 by user barker29 with git at or near revision 9f52574.
%% \label{tab:np16reduction-factor}
%% \begin{center}\begin{tabular}{llllll}
%% \toprule
%%   &  \multicolumn{5}{c}{order ($p$)} \\
%%   \cmidrule{2-6}
%% elements& 1& 2& 4& 6& 8 \\
%% \midrule
%% 64&     0.06& 0.14& 0.30& 0.34& 0.34 \\
%% 512&    0.12& 0.23& 0.31& 0.36& --- \\
%% 4096&   0.18& 0.25& 0.33& ---& --- \\
%% 32768&  0.22& 0.24& ---& ---& --- \\
%% 262144& 0.22& ---& ---& ---& --- \\
%% \bottomrule
%% \end{tabular}\end{center}
%% \end{table}

%% \begin{table}
%% \caption{Time to solution for various $h$ and $p$ refinement levels.}
%% % Generated on 18 April 2016 at 07:50 on machine sierra6 by user barker29 with git at or near revision 9f52574.
%% \label{tab:np16solve-time}
%% \begin{center}\begin{tabular}{llllll}
%% \toprule
%%   &  \multicolumn{5}{c}{order ($p$)} \\
%%   \cmidrule{2-6}
%% elements& 1& 2& 4& 6& 8 \\
%% \midrule
%% 64& 0.02& 0.05& 0.35& 2.59& 14.06 \\
%% 512& 0.08& 0.19& 2.45& 18.96& --- \\
%% 4096& 0.17& 1.15& 20.68& ---& --- \\
%% 32768& 1.19& 9.87& ---& ---& --- \\
%% 262144& 11.27& ---& ---& ---& --- \\
%% \bottomrule
%% \end{tabular}\end{center}
%% \end{table}

In Table \ref{tab:weak-scaling}, we explore the parallel scalability
of this algorithm, doing a weak scaling study where the number of
elements is kept constant per processor as we increase the number of
processors.  This particular test uses a trial space order of $ p=1 $
but a test space order of 2 rather than the theoretically necessary 3
(see the remarks in Section \ref{sec:reduced-order}).  The test was
run on an IBM BlueGene/Q machine, where we use four MPI tasks per
node.

While the number of iterations in Table~\ref{tab:weak-scaling} exhibits some growth,
the overall performance is reasonably scalable, and we are continuing to work
on improving the per-iteration run time in our implementation.

\begin{table}
\caption{Weak scaling for the solver with polynomial order fixed at $p=1$.}
% Generated on 10 October 2016 at 17:03 on machine vulcanlac4 by user barker29 with git at or near revision b58333f.
\label{tab:weak-scaling}
\begin{center}\begin{tabular}{lccccc}
\toprule
  processors&  elements&  iterations&  conv. factor&  solve time  & time/iteration\\
\midrule
     4&  2.62e+5& \zz9&   0.21&   249.42s & 27.7s\\
    32&  2.10e+6&   11&   0.26&   473.84s & 43.1s\\
   256&  1.68e+7&   12&   0.29&   547.95s & 45.7s\\
  2048&  1.34e+8&   13&   0.32&   665.81s & 51.2s\\
 16384&  1.07e+9&   14&   0.37&   745.69s & 53.3s\\
\bottomrule
\end{tabular}\end{center}
\end{table}

\subsection{Influence of the order of the test space}
\label{sec:reduced-order}

Currently known theoretical results on the DPG method requires one to
set the test space a few degrees higher than the trial space.
Higher order test spaces can  significantly add to
the size of the discrete system \eqref{eq:dpgmat} and the overall
computational cost.
Our numerical results indicate that
test spaces of one degree lower than the theoretical requirement
often continue to yield a scalable method.
We have observed this for triangles, quadrilaterals, tetrahedra, and hexahedra.
In Table~\ref{tab:reduced-order}, we  present some representative results for the interesting case of triangles in two dimensions, where the scalability depends on the parity of $p$.
For even $p=4$, scalability requires a test space order one degree higher than for the odd order $p=5$. The dependence
of the error convergence rate  on
the parity of $p$ was discussed
in \cite{BoumaGopalHarb14}. It is interesting
to observe that our preconditioner also exhibits such dependence.

\begin{table}
\caption{Effect of test space order $r$ on scalability with respect to $h$-refinement for a triangular mesh.}
\label{tab:reduced-order}
\begin{center}\begin{tabular}{lccccr}
  \toprule
  & \multicolumn{2}{c}{$p=4$}& & \multicolumn{2}{c}{$p=5$} \\
    \cmidrule{2-3} \cmidrule{5-6}
  refine&  $r=4$&  $r=5$&  &  $r=5$&  $r=6$ \\
  \midrule
  1&  14&  12&  &   18&    18 \\
  2&  20&  14&  &   16&    16 \\
  3&  28&  15&  &   17&    17 \\
  4&  46&  15&  &   16&    16 \\
  5&  50&  16&  &   16&    16 \\
  6&  67&  17&  &   15&    15 \\
  \bottomrule
\end{tabular}\end{center}
\end{table}

\subsection{Scalability with respect to $h$ on unstructured meshes}
\label{sec:unstructured}

\begin{figure}
\begin{center}
  \includegraphics[width=0.32\textwidth]{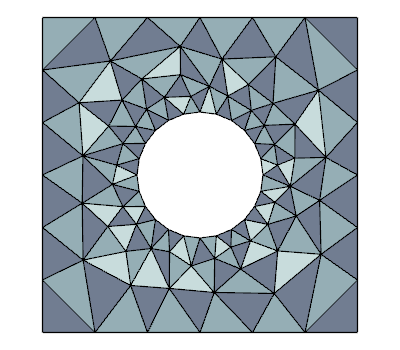}
  \includegraphics[width=0.32\textwidth]{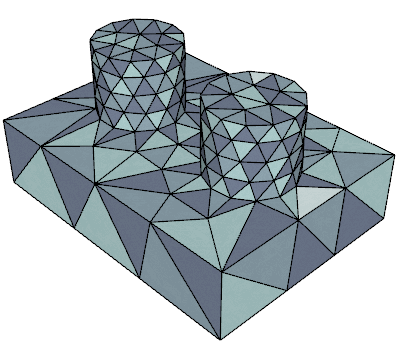}
  \includegraphics[width=0.32\textwidth]{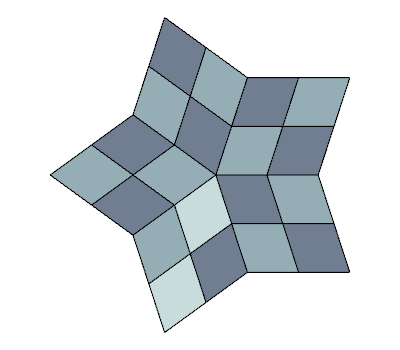}
\end{center}
\caption{Three unstructured meshes, using triangles, tetrahedra, and quadrilaterals.}
\label{fig:unstructured}
\end{figure}

Next we consider problems with different meshes, including unstructured triangular, quadrilateral, and tetrahedral meshes, using in particular the meshes shown in Figure \ref{fig:unstructured} at various levels of refinement.
The problem is the same as in Section~\ref{sec:structured} except for the mesh. We fix $p=1$ and focus on the scalability with respect to $h$.
The convergence results in Table~\ref{tab:un-np64iterations} demonstrate that the preconditioner continues to be scalable in these more general settings.

\begin{table}
\caption{Number of CG iterations and average reduction factors per iteration (in parenthesis) for several unstructured meshes at various refinement levels.}
% Generated on 31 August 2016 at 13:00 on machine sierra1296 by user barker29 with git at or near revision 756b5b4.
\label{tab:un-np64iterations}
\begin{center}\begin{tabular}{lccc}
\toprule
refine& triangles& tetrahedra& quadrilaterals \\
\midrule
0&  13 (0.34)&\zz8 (0.17)& \zz9 (0.21) \\
1&  14 (0.37)&  11 (0.27)& 12 (0.31) \\
2&  14 (0.36)&  13 (0.35)& 13 (0.32) \\
3&  14 (0.35)&  15 (0.39)& 13 (0.33) \\
4&  14 (0.36)&  16 (0.42)& 12 (0.31) \\
5&  14 (0.37)&           & 12 (0.30) \\
6&  15 (0.38)&           & 12 (0.30) \\
7&  15 (0.38)&           & 12 (0.29) \\
8&  15 (0.39)&           & 12 (0.30) \\
% 9& ---&               ---& 12 (0.31) \\
\bottomrule
\end{tabular}\end{center}
\end{table}

%% \begin{table}
%% \caption{Average reduction factor per iteration for several unstructured meshes at various refinement levels.}
%% % Generated on 31 August 2016 at 13:00 on machine sierra1296 by user barker29 with git at or near revision 756b5b4.
%% \label{tab:un-np64reduction-factor}
%% \begin{center}\begin{tabular}{llll}
%% \toprule
%% refine& triangles& tetrahedra& quadrilaterals \\
%% \midrule
%% 0& 0.34& 0.17& 0.21 \\
%% 1& 0.37& 0.27& 0.31 \\
%% 2& 0.36& 0.35& 0.32 \\
%% 3& 0.35& 0.39& 0.33 \\
%% 4& 0.36&  ---& 0.31 \\
%% 5& 0.37&  ---& 0.30 \\
%% 6& 0.38&  ---& 0.30 \\
%% 7& 0.38&  ---& 0.29 \\
%% 8& 0.39&  ---& 0.30 \\
%% 9& ---&   ---& 0.31 \\
%% \bottomrule
%% \end{tabular}\end{center}
%% \end{table}

%% \begin{table}
%% \caption{Time to solution for several unstructured meshes at various refinement levels.}
%% % Generated on 31 August 2016 at 13:00 on machine sierra1296 by user barker29 with git at or near revision 756b5b4.
%% \label{tab:un-np64solve-time}
%% \begin{center}\begin{tabular}{llll}
%% \toprule
%% refine& triangles& tetrahedra& quadrilaterals \\
%% \midrule
%% 0& 0.45& 0.09& 0.10 \\
%% 1& 0.07& 0.62& 0.05 \\
%% 2& 0.08& 1.10& 0.06 \\
%% 3& 0.11& 9.72& 0.08 \\
%% 4& 0.21& ---& 0.11 \\
%% 5& 0.58& ---& 0.63 \\
%% 6& 2.53& ---& 0.64 \\
%% 7& 12.87& ---& 2.55 \\
%% 8& 80.47& ---& 12.80 \\
%% 9& ---& ---& 70.63 \\
%% \bottomrule
%% \end{tabular}\end{center}
%% \end{table}

\subsection{Behavior of solver with respect to contrast in coefficient}

In the following numerical results the coefficient $ \coeff $ in \eqref{eq:poisson} is piecewise constant, chosen randomly on each element, so that it is 1 with probability 1/2 and $ \coeff_0 $ with probability 1/2, where $ \coeff_0 $ is a specified constant across the mesh.
Here the $\dB^o$ component in \eqref{eq:practical-preconditioner} is constructed from an $H^1$ matrix assembled from the bilinear form in \eqref{eq:poisson} using the varying coefficient $\coeff$, and $\dP^f$ is constructed as usual using the principal minor of $\dof{A}$, which also includes the coefficient $\coeff$.
In Table \ref{tab:contrast-iterations} we report the number of iterations and average reduction factor for several refinement levels and choice of contrast $ \coeff_0 $.
The results show that the problem gets harder for high-contrast coefficients, as expected, but the solver still performs well.

\begin{table}
\caption{Number of CG iterations and average reduction factors per iteration (in parenthesis) for various values of the contrasts coefficient $ \coeff_0 $.}
% Generated on 21 September 2016 at 17:03 on machine sierra1626 by user barker29 with git at or near revision 26f7223.
\label{tab:contrast-iterations}
\begin{center}\begin{tabular}{lllllll}
\toprule
  &  \multicolumn{6}{c}{contrast} \\
  \cmidrule{2-7}
elements& 1e-06& 1e-04& 1e-02& 1e+00& 1e+02& 1e+04 \\
\midrule
   64&  13 (0.29)& 12 (0.31)& 10 (0.24)& 5 (0.06)&  8 (0.15)& 12 (0.24) \\
  512&  31 (0.64)& 29 (0.61)& 14 (0.36)& 7 (0.10)& 11 (0.27)& 17 (0.44) \\
 4096&  64 (0.80)& 49 (0.75)& 15 (0.39)& 8 (0.16)& 13 (0.33)& 33 (0.64) \\
32768& 119 (0.89)& 73 (0.83)& 16 (0.41)& 9 (0.20)& 14 (0.36)& 43 (0.72) \\
\bottomrule
\end{tabular}\end{center}
\end{table}

%% \begin{table}
%% \caption{Average reduction factor per iteration for varying contrasts $ \coeff_0 $.}
%% % Generated on 21 September 2016 at 17:03 on machine sierra1626 by user barker29 with git at or near revision 26f7223.
%% \label{tab:contrast-reduction-factor}
%% \begin{center}\begin{tabular}{lllllll}
%% \toprule
%%   &  \multicolumn{6}{c}{contrast} \\
%%   \cmidrule{2-7}
%% elements& 1e-06& 1e-04& 1e-02& 1e+00& 1e+02& 1e+04 \\
%% \midrule
%%    64& 0.29& 0.31& 0.24& 0.06& 0.15& 0.24 \\
%%   512& 0.64& 0.61& 0.36& 0.10& 0.27& 0.44 \\
%%  4096& 0.80& 0.75& 0.39& 0.16& 0.33& 0.64 \\
%% 32768& 0.89& 0.83& 0.41& 0.20& 0.36& 0.72 \\
%% \bottomrule
%% \end{tabular}\end{center}
%% \end{table}

\section{Conclusions} \label{sec:conclusions}
In this paper we presented a scalable preconditioner for the primal DPG
formulation of the Poisson problem based on parallel algebraic multigrid
techniques.
We proved that the preconditioner is optimal under certain assumptions on
the mesh and problem coefficients.
We also demonstrated that the new algorithm performs well on a wide variety of
problems, including some where the theory is not applicable.
Due to its algebraic nature, the preconditioner is easy to apply in practice, and has a
freely available implementation in the MFEM library.

%% \bigskip

\bibliographystyle{siam}
\bibliography{dpg}

\end{document}